\newcommand{\mb}{\mathbf}
\renewcommand{\Re}{{\mathbb{R}}}
\newtheorem{lemma}{Lemma}
\newtheorem{theorem}{Theorem}
\newtheorem{assumption}{Assumption}
\title{\LARGE \bf
A Mixing-Accelerated Primal-Dual Proximal Algorithm for \\Distributed Nonconvex Optimization
}
\author{Zichong Ou, Chenyang Qiu, Dandan Wang and Jie Lu% <-this % stops a space
\thanks{Z. Ou, D. Wang, J. Lu are with the School of Information Science and Technology, Shanghaitech University, 2012 Shanghai, China. J. Lu is also with the Shanghai Engineering Research Center of Energy Efficient and Custon AI IC, 201210 Shanghai, China. Email: {\tt\small ouzch, wangdd2, lujie@shanghaitech.edu.com}.}\\
\thanks{C. Qiu is with Charles L. Brown Department of Electrical and Computer Engineering, University of Virginia, Charlottesville, VA 22904- 4743, USA. Email: {\tt\small nzp4an@virginia.edu}.}%
}
\begin{document}

\maketitle
\thispagestyle{empty}
\pagestyle{empty}

%%%%%%%%%%%%%%%%%%%%%%%%%%%%%%%%%%%%%%%%%%%%%%%%%%%%%%%%%%%%%%%%%%%%%%%%%%%%%%%%
\begin{abstract} 
	In this paper, we develop a distributed mixing-accelerated primal-dual proximal algorithm, referred to as MAP-Pro, which enables nodes in multi-agent networks to cooperatively minimize the sum of their nonconvex, smooth local cost functions in a decentralized fashion. The proposed algorithm is constructed upon minimizing a computationally inexpensive augmented-Lagrangian-like function and incorporating a time-varying mixing polynomial to expedite information fusion across the network. The convergence results derived for MAP-Pro include a sublinear rate of convergence to a stationary solution and, under the Polyak-{\L}ojasiewics (P-{\L}) condition, a linear rate of convergence to the global optimal solution. Additionally, we may embed the well-noted Chebyshev acceleration scheme in MAP-Pro, which generates a specific sequence of mixing polynomials with given degrees and enhances the convergence performance based on MAP-Pro. Finally, we illustrate the competitive convergence speed and communication efficiency of MAP-Pro via a numerical example.
\end{abstract}

%%%%%%%%%%%%%%%%%%%%%%%%%%%%%%%%%%%%%%%%%%%%%%%%%%%%%%%%%%%%%%%%%%%%%%%%%%%%%%%%
\section{INTRODUCTION}

Distributed optimization has gained significant attention in recent years due to its high efficiency, robustness and scalability in numerical analysis and engineering applications \cite{Nedi2015ConvergenceRO}. In the past decade, a large number of distributed convex optimization algorithms has been come up with. For example, \cite{Nedi2009DistributedSM} proposes a distributed subgradient algorithm for solving convex optimization problems and is shown to sublinearly converge to a suboptimal soultion for convex, smooth problem in \cite{Yuan2013OnTC}. In \cite{Shi2014EXTRAAE}, an exact first-order algorithm (EXTRA) is developed, which guarantees linear convergence under the restricted strong convexity condition. In addition, AMM \cite{Wu2020AUA} introduces a primal-dual paradigm for convex composite optimization problems, which unifies a series of recent distributed optimization algorithms including EXTRA. 

However, most distributed convex optimization algorithms cannot be extended to address the more challenging nonconvex optimization problems that arise in a substantial number of real-world applications, such as dictionary learning \cite{Wai2015ACD}, power allocation \cite{Bianchi2011ConvergenceOA}, energy efficiency in mobile ad hoc networks \cite{hashempour2021distributed}, just to name a few. Compared to the abundant literature of distributed convex optimization, so far relatively few algorithms \cite{Alghunaim2021AUA, scutari2013decomposition, sun2016distributed, hong2017prox ,sun2018distributed, sun2019distributed, yi2021linear, yi2022sublinear} have been proposed to address a variety of nonconvex problems based on various techniques. For instance, SUDA \cite{Alghunaim2021AUA} generalizes several decentralized algorithms, including EXTRA, and has shown their effectiveness in nonconvex, smooth optimization. Utilizing the Successive Convex Approximation (SCA) technique \cite{scutari2013decomposition} to partially linearize the nonconvex sum-utility functions, SONATA \cite{sun2016distributed} is shown to asymptotically converge to a stationary solution for nonconvex, nonsmooth optimization. Several primal-dual schemes have been presented in \cite{hong2017prox} and \cite{sun2018distributed} whose primal updates involve minimization of a linearized augmented Lagrangian (AL) function and are guaranteed to sublinearly converge to a stationary solution with nonconvex, smooth cost functions. xFILTER \cite{sun2019distributed} reformulates the nonconvex, smooth problem to a regularized least-squares problem, and employs Chebyshev iterations to approximate the solution at each iteration, which facilitate convergence by increasing communication rounds.

Most of these existing distributed nonconvex optimization algorithms guarantee the convergence to stationarity \cite{Alghunaim2021AUA,scutari2013decomposition,sun2016distributed,hong2017prox,sun2018distributed,sun2019distributed,yi2022sublinear,yi2021linear}, while better convergence results can be obtained under the Polyak-{\L}ojasiewics (P-{\L}) condition, which is a nonconvex setting weaker than strong convexity. For instance, SUDA \cite{Alghunaim2021AUA} converges linearly to the global optimum if the global cost function satisfies the P-{\L} condition. Similar results can be found in \cite{yi2021linear} and \cite{yi2022sublinear}. In particular, based on the primal-dual gradient method, \cite{yi2021linear} conducts gradient descent of AL function, while \cite{yi2022sublinear} modifies the well-known Alternating Direction Method of Multipliers (ADMM), yielding the distributed linearized ADMM (L-ADMM) algorithm.

In this paper, we approach a nonconvex and smooth distributed optimization problem in a primal-dual manner. To reduce computational cost caused by the primal minimization operations such as those in \cite{hong2017prox} and \cite{sun2018distributed}, we design an augmented-Lagrangian-like function consisting of a linearization of the global cost function and a time-varying proximal term, and minimize this function at each primal iteration. To facilitate information mixing throughout the entire network, we let the proximal term involve a polynomial of mixing matrices with time-varying degree and coefficients, which can be executed through an inner loop of multiple communication rounds within each iteration. Thus, the resulting algorithm is called the \textit{Mixing-Accelerated Primal-Dual Proximal algorithm}, referred to as MAP-Pro. To further enhance the convergence speed and communication efficiency, we integrate MAP-Pro with Chebyshev acceleration \cite{auzinger2011iterative} that specializes the mixing polynomials, leading to a special case of MAP-Pro called MAP-Pro-CA.

We show that MAP-Pro converges to a stationary solution at a sublinear rate for nonconvex smooth optimization. MAP-Pro is able to achieve a linear convergence rate to the global optimum, provided that the P-{\L} condition is additionally assumed. With Chebyshev acceleration, superior convergence rates are obtained compared with the existing works in \cite{yi2021linear} and \cite{yi2022sublinear}. Simulations on a distributed binary classification problem with nonconvex regularizers demonstrate that MAP-Pro-CA outperforms a number of state-of-the-art alternative methods in terms of communication efficiency.

The outline of the paper is as follows: Section~\ref{section problem form} formulates the nonconvex optimization problem, and Section~\ref{section algorithm design} describes the development of MAP-Pro. Section~\ref{convergence analysis} states the convergence results for MAP-Pro, and Section~\ref{section discuss} discusses the mixing acceleration scheme. Section~\ref{section simulation} compares the numerical results of MAP-Pro and several alternative methods, and Section~\ref{section conclusion} concludes the paper.

\emph{Notation:} Given any differentiable function $f$, $\nabla f$ denotes the gradient of $f$. $\operatorname{Null}(\cdot)$ and $\operatorname{span}(\cdot)$ denote the null space and range space of the matrix argument. $\mathbf{1}_n$ ($\mathbf{0}_n$) denotes the column one (zero) vector of dimension $n$. $\mathbf{I}_n$, $\mathbf{O}_n$ denote the $n$-dimensional identity matrix and zero matrix, respectively. Besides, for $a_i\in \mathbb{R},i=1,\dots,n$, $\operatorname{diag}([a_1,\cdots,a_n])$ represents the $n$-dimensional diagonal matrix, whose $i$-th diagonal entry is $a_i$. In addition, $\langle \cdot,\cdot \rangle$ is the Euclidean inner product, $\otimes$ is the Kronecker product, and $\|\cdot\|$ is the $\ell_2$ norm. For any two matrices $A,B\in \mathbb{R}^{d\times d}$, $A\succ B$ means $A-B$ is positive definite, and $A \succeq B$ means $A-B$ is positive semi-definite. $\bar{\lambda}_A$ and $\underline{\lambda}_A$ represent the largest and the smallest positive eigenvalues of matrix $A$. Besides, $A^{\dagger}$ denotes the Moore-Penrose inverse of matrix $A$. If $A$ is symmetric and $A \succeq \mathbf{O}_d$, for $\mb{x}\in \mathbb{R}^d$, $\|\mb{x}\|^2_A:=\mb{x}^{\mathsf{T}}A\mb{x}$.

\section{PROBLEM FORMULATION}\label{section problem form}

Consider a network of $N$ nodes, where each node $i$ has a local smooth and possibly nonconvex differentiable cost function $f_i : \mathbb{R}^d \rightarrow \mathbb{R}$. And all the nodes are required to collaboratively solve the following optimization problem:
\begin{equation}
\min _{x \in \mathbb{R}^{d}} f(x)= \sum_{i=1}^{N} f_{i}(x). \label{p1}
\end{equation}

We model the network as a connected, undirected graph $\mathcal{G} = (\mathcal{V},\mathcal{E})$, where the vertex set $\mathcal{V} = \{1, \dots, N\}$ is the set of $N$ nodes and the edge set $\mathcal{E} \subseteq \{\{i,j\}|i,j \in \mathcal{V},i \neq j\}$ describes the underlying interactions among the nodes, i.e., each node only communicates with the neighboring nodes in $\mathcal{N}_i=\{j\in\mathcal{V}:\{i,j\}\in\mathcal{E}\}$. 

Next, we impose the following problem assumptions on problem~\eqref{p1}.
\begin{assumption}\label{assumption smooth}
The global cost function $f:\Re^{d}\rightarrow\Re$ is smooth, i.e., there exists $\bar{M}\geq 0$ such that
\begin{equation}
\|\nabla f(x) - \nabla f(y)\|\leq \bar{M}\|x-y\|,\quad\forall x,y\in \Re^{d}.\label{smooth}
\end{equation}
\end{assumption}

\begin{assumption}\label{a1}
The optimal set $\mathbb{X}^*$ of problem~\eqref{p1} is nonempty and the optimal function value $f^* >-\infty$.
\end{assumption}

Assumptions~\ref{assumption smooth} and~\ref{a1} are commonly adopted in existing distributed nonconvex optimization works \cite{sun2018distributed, sun2019distributed, yi2021linear, yi2022sublinear}. Also, compared with the smoothness of each $f_i$ assumed in \cite{yi2022sublinear,yi2021linear}, Assumption~\ref{assumption smooth} only requires the smoothness of the global cost function ${f}$ and thus is slightly less restrictive.

To solve problem~\eqref{p1} over the graph $\mathcal{G}$, we let each node $i \in \mathcal{V}$ maintain a local estimate $x_i \in \mathbb{R}^d$ of the global decision $x \in \mathbb{R}^d$ in problem~\eqref{p1}, and define 
\begin{equation}
\tilde{f}(\mathbf{x}) := \sum _{i \in \mathcal{V}} f_i(x_i),\quad \mb{x} = (x_1^{\mathsf{T}}, \dots, x_N^{\mathsf{T}})^{\mathsf{T}} \in \Re ^{Nd}.
\end{equation}
It has been shown in \cite{mokhtari2016decentralized} that problem~\eqref{p1} can be equivalently transformed  into 
\begin{equation}
\begin{aligned}
\underset{\mathbf{x} \in \mathbb{R}^{N d}}{\operatorname{minimize}}\quad & \tilde{f}(\mb{x}) \\
\operatorname{subject\;to}\quad & \tilde{H}^{\frac{1}{2}} \mb{x}=0,
\end{aligned} \label{p2}
\end{equation}
where $\tilde{H} = \tilde{H}^{\mathsf{T}}\succeq\mb{O}_{Nd}$ such that $\operatorname{Null}(\tilde{H}) = \mathcal{S} := \{\mb{x} \in \Re ^{Nd}|x_1 = \cdots = x_N\}$. The optimal value of problem~\eqref{p2} is the same as that of \eqref{p1}. Obviously, under Assumption~\ref{assumption smooth}, $\tilde{f}$ is $\bar{M}-$smooth.

%%%%%%%%%%%%%%%%%%%%%%%%%%%%%%%%%%%%%%%%%%%%%%%%%%%%%%%%%%%%%%%%%%%%%%%%%%%%%%%%
\section{ALGORITHM DEVELOPMENT}\label{section algorithm design}

In this section, we develop a distributed algorithm for solving the nonconvex optimization problem~\eqref{p2}, which attempts to accelerate information mixing across the network.

To this end, we first consider solving \eqref{p2} in the following primal-dual fashion. Let $\mathbf{x}^k,\mb{v}^{k}\in \mathbb{R}^{Nd}$ be the global primal and dual variables at iteration $k \geq 0$, respectively. Then, starting from any $\mb{x}^0,\mb{v}^0 \in \mathbb{R}^{Nd}$, for any $k \geq 0$,
\begin{align}
	\label{argmin approximation primal} \notag \mb{x}^{k+1}=&\underset{\mathbf{x} \in \mathbb{R}^{N d}}{\arg \min}\;\tilde{f}(\mb{x}^k) + \langle \nabla \tilde{f}(\mb{x}^k), \mb{x}-\mb{x}^k \rangle +\langle\theta\mb{v}^{k}, \tilde{H}^{\frac{1}{2}} \mb{x}\rangle\\ 
    &\qquad\qquad+\frac{\rho}{2}\|\mb{x}\|_{H}^{2}+ \frac{1}{2} \|\mb{x}-\mb{x}^k\|^2_{B^k}, \\ 
	\label{argmin approximation dual} \mb{v}^{k+1}=&\mb{v}^{k}+\rho \tilde{H}^{\frac{1}{2}} \mb{x}^{k+1}.
\end{align}
In the primal update \eqref{argmin approximation primal}, $\mb{x}^{k+1}$ is obtained by minimizing an augmented-Lagrangian-like function, which consists of 
\begin{itemize}
\item $\tilde{f}(\mb{x}^k) + \langle \nabla \tilde{f}(\mb{x}^k), \mb{x}-\mb{x}^k \rangle+\theta(\mb{v}^{k})^{\mathsf{T}} \tilde{H}^{\frac{1}{2}} \mb{x}$, which approximates the Lagrangian function $\tilde{f}(\mb{x})+(\mb{v}^{k})^{\mathsf{T}} \tilde{H}^{\frac{1}{2}} \mb{x}$ by replacing $\tilde{f}(\mb{x})$ with its first-order approximation at $\mb{x}^k$ and adding an additional weight $\theta>0$.
\item $\frac{\rho}{2}\|\mb{x}\|_{H}^{2}$, $\rho>0$, a penalty on the consensus constraint $\tilde{H}^{\frac{1}{2}} \mb{x}=0$. Note that here we use a different weight matrix $H=H^{\mathsf{T}}\succeq\mb{O}_{Nd}$ other than $\tilde{H}$ in order to introduce more flexibility and, like $\tilde{H}$, require $\operatorname{Null}(H) = \mathcal{S}$ so that it is a legitimate penalty on the consensus residual.
\item $\frac{1}{2} \|\mb{x}-\mb{x}^k\|^2_{B^k}$, a proximal term with a possibly time-varying symmetric matrix $B^k\in \mathbb{R}^{Nd \times Nd}$.
\end{itemize}
We require that $B^k + \rho H \succ \mathbf{O}$, so that $\mb{x}^{k+1}$ in \eqref{argmin approximation primal} is well-defined and uniquely exists. The update \eqref{argmin approximation dual} is a dual ascent step with respect to the augmented-Lagrangian-like function, where the corresponding ``dual gradient" is obtained by evaluating the constraint residual at $\mb{x}^{k+1}$. 

It can be shown that any primal-dual optimum pair $(\mb{x}^*,\mb{v}^*)$ of problem~\eqref{p2} is a fixed point of \eqref{argmin approximation primal}--\eqref{argmin approximation dual}. To see this, note from \eqref{argmin approximation primal} that $\mb{x}^{k+1}$ uniquely exists and satisfies the first-order optimality condition
\begin{equation}
\label{first-order opt} \nabla \tilde{f}(\mb{x}^k) + \theta\tilde{H}^{\frac{1}{2}} \mb{v}^k + \rho H \mb{x}^{k+1}+ B^k(\mb{x}^{k+1} - \mb{x}^k)=0.
\end{equation}
Also note that $(\mb{x}^*,\mb{v}^*)$ satisfies
\begin{equation}
\nabla \tilde{f}(\mb{x}^*)+ \theta\tilde{H}^{\frac{1}{2}} \mb{v}^*=0. \label{fixed point}
\end{equation}
Once $(\mb{x}^k,\mb{v}^k) = (\mb{x}^*,\mb{v}^*)$, $(\mb{x}^{k+1},\mb{v}^{k+1})$ has to be $(\mb{x}^*,\mb{v}^*)$ due to \eqref{fixed point}, $H\mb{x}^*=\tilde{H}^{\frac{1}{2}}\mb{x}^*=0$, and the uniqueness of $\mb{x}^{k+1}$.

Next, we discuss our distributed algorithm design based on \eqref{argmin approximation primal}--\eqref{argmin approximation dual}. Using \eqref{first-order opt}, we rewrite \eqref{argmin approximation primal} as
\begin{equation}
	\mb{x}^{k+1} = \mb{x}^k - (B^k + \rho H)^{-1}(\nabla \tilde{f}(\mb{x}^k) + \theta\tilde{H}^{\frac{1}{2}} \mb{v}^k + \rho H \mb{x}^k). \label{x^k+1 vk}
\end{equation}
Then, we apply the following change of variable
\begin{equation*}
\mb{q}^k = \tilde{H}^{\frac{1}{2}} \mb{v}^k.
\end{equation*}
This requires $\mathbf{q}^k \in \mathcal{S}^{\perp}\, \forall k \geq 0$, where $\mathcal{S}^{\perp} := \{\mb{x}\in \Re ^{Nd}| \, x_1 + \cdots + x_N = \mb{0}\}$ is the orthogonal complement of $\mathcal{S}$. This can be simply guaranteed by
\begin{equation}
\mb{q}^0 \in \mathcal{S}^{\perp}.\label{q0}
\end{equation}
Furthermore, for convenience, let $G^k:= (B^k + \rho H)^{-1}$ and we introduce an auxiliary variable
\begin{equation}
\label{zk original}\mb{z}^{k} = \nabla \tilde{f}(\mb{x}^k) + \theta\mb{q}^k + \rho H \mb{x}^k,
\end{equation}
where $\mb{x}^0\in \mathbb{R}^{Nd}$ can be arbitrary. Thus, \eqref{argmin approximation primal}--\eqref{argmin approximation dual} become 
\begin{align} 
\label{xk+1 original} &\mb{x}^{k+1} = \mb{x}^k - G^k \mb{z}^k,\\ 
\label{qk+1 original} &\mb{q}^{k+1}=\mb{q}^{k}+\rho \tilde{H} \mb{x}^{k+1}.
\end{align}

With the equivalent form \eqref{q0}--\eqref{qk+1 original} of \eqref{argmin approximation primal}--\eqref{argmin approximation dual}, we now discuss the design of $G^k$ (which indeed determines $B^k$ in the proximal term of \eqref{argmin approximation primal}), aiming at utilizing $G^k$ to accelerate the information fusion throughout the network. We choose $G^k$ to take the following form:
\begin{equation}
G^k = \zeta^k \mb{I}_{Nd} - \eta^k P_{\tau^k}(H), \quad\forall k\ge0,\label{Gk}
\end{equation}
where $\zeta^k>0$ and $P_{\tau^k}(H) \in \mathbb{R}^{Nd\times Nd}$ is a polynomial of $H$ with degree $\tau^k\ge 1$ and with no constant monomial, given by $P_{\tau^k}(H) = \sum_{t=1}^{\tau^k}a_t^kH^t$ for some $a_1^k,\dots, a_{\tau^k}^k\in \mathbb{R}$. Also, suppose the assumptions below hold.

\begin{assumption}\label{assumtion H}
	The matrices $H, \tilde{H} \in \mathbb{S}^{Nd}$ satisfy:
	\begin{itemize}
		\item[(\romannumeral1)] $H, \tilde{H} \succeq 0$ with $\operatorname{Null}(H)=\operatorname{Null}(\tilde{H})=\mathcal{S}$.
		\item[(\romannumeral2)] $H$ and $\tilde{H}$ can commute in matrix multiplication, i.e., $H\tilde{H} = \tilde{H}H$.
	\end{itemize}
\end{assumption}

\begin{assumption}\label{assumption polynomial}
For each $k\geq 0$, the polynomial $P_{\tau^k}(H)=\sum_{t=1}^{\tau^k}a_t^kH^t$ and the parameters $\zeta^k$, $\eta^k$ satisfy:
\begin{itemize}
	\item[(\romannumeral1)] $P_{\tau^k}(H)$ is positive semi-definite for any degree $\tau^k \geq 1$.
	\item[(\romannumeral2)] $\zeta^k >0$, $0<\eta^k<\zeta^k/\bar{\lambda}_{P_{\tau^k}(H)}$.
\end{itemize}
\end{assumption}

Assumption~\ref{assumtion H} is also assumed in \cite{Alghunaim2021AUA}, which enables our algorithm framework to unify several decentralized algorithms. In addition, since $\bar{\lambda}_{P_{\tau^k}(H)}=\sum_{t=1}^{\tau^k}a_t^k \bar{\lambda}_H^t$, Assumption~\ref{assumption polynomial} guarantees that $G^k\succ \mb{O}_{Nd}$ and that $G^k$, $H$ and $\tilde{H}$ are commutative in matrix multiplication. The choices of $\tau^k$ and $a_t^k$ will be discussed in Section~\ref{convergence analysis} and \ref{section discuss}.

Finally, to enable distributed implementation, we choose 
\begin{equation*}
H= P \otimes \mb{I}_d, \quad \tilde{H} = \tilde{P}\otimes \mb{I}_d,
\end{equation*}
where $P, \tilde{P} \in \Re^{N\times N}$ are symmetric positive semi-definite matrices with neighbor-sparse structures, i.e., their $(i,j)$-entries $p_{ij}$ and $\tilde{p}_{ij}$ are zero if $i\neq j$ and $\{i,j\}\notin\mathcal{E}$. The nodes can jointly determine such neighbor-sparse $P$, $\tilde{P}$ satisfying Assumption~\ref{assumtion H} without any centralized coordination \cite{Wu2020AUA}. Thus, the dual update \eqref{qk+1 original} can be executed in a fully distributed way. Moreover, given any $\mb{y}\in \mathbb{R}^{Nd}$, $P_{\tau^k}(H)\mb{y}$ can be evaluated by the nodes via local interactions only, as is described in Oracle~\ref{oracle acc}, which enables distributed computation of $\mb{x}^{k+1}$ according to \eqref{xk+1 original}.

\floatname{algorithm}{Oracle}
\begin{algorithm}[t] 
        \renewcommand{\thealgorithm}{$\mathcal{A}$}
        \caption{Mixing Acceleration} \label{oracle acc}
        \begin{algorithmic}[1]
                \STATE \textbf{Input:} $\mb{y}=(y_1^{\mathsf{T}},\ldots,y_N^{\mathsf{T}})^{\mathsf{T}}\in \Re^{Nd}$, $P\succeq\mathbf{O}_N$, $\tau^k\geq 1$, $\mb{a}^k=(a_1^k,\dots,a_{\tau^k}^k)^{\mathsf{T}}\in\Re^{\tau^k}$.\\ 
                \STATE \textbf{Procedure} $\text{MACC}(\mathbf{y},P,\tau^k,\mb{a}^k)$ \\
				\STATE Each node $i\! \in\! \mathcal{V}$ maintains a variable $y_i^t$ and sets $y_i^0=y_i$.
                \FOR{$t=1:\tau^k$} 
                \STATE Each node $i \in \mathcal{V}$ sends $y_i^{t-1}$ to every neighbor $j\in\mathcal{N}_i$ and then computes $y_i^{t}= \sum_{j\in {\mathcal{N}}_i \cup \{i\}} p_{ij} y_j^{t-1}$.\\
                \ENDFOR
                \STATE \textbf{Output:} Each node $i \in \mathcal{V}$ returns $\sum_{t=1}^{\tau^k}a_t^ky_i^t$, so that $P_{\tau^k}(H)\mb{y}=((\sum_{t=1}^{\tau^k}a_t^ky_1^t)^{\mathsf{T}},\ldots,(\sum_{t=1}^{\tau^k}a_t^ky_N^t)^{\mathsf{T}})^{\mathsf{T}}$.\\
                \STATE \textbf{End procedure}\\
        \end{algorithmic}
\end{algorithm}

Accordingly, \eqref{q0}--\eqref{qk+1 original} with the above setting of $G^k$, $H$, and $\tilde{H}$ can be implemented over $\mathcal{G}$ in a fully distributed way. We call it \emph{\underline{M}ixing-\underline{A}ccelerated \underline{P}rimal-Dual \underline{Pro}ximal} (MAP-Pro) algorithm, because when we set $\tau^k>1$, $G^k$ leads to multiple communication rounds during each iteration $k$, which potentailly accelerates the information mixing process. The detailed distributed implementation of MAP-Pro is presented in Algorithm~\ref{inner primal}, in which Line~8 is achieved via Oracle~\ref{oracle acc} and is fully decentralized.

\floatname{algorithm}{Algorithm}
\begin{algorithm}[b]
        \renewcommand{\thealgorithm}{1}
	\caption{MAP-Pro}    
	\label{inner primal}               
	\begin{algorithmic}[1]
	  \STATE \textbf{Variables:} $\mb{x}^k=((x_1^k)^{\mathsf{T}},\ldots,(x_N^k)^{\mathsf{T}})^{\mathsf{T}}$, $\mb{z}^k=((z_1^k)^{\mathsf{T}},\ldots,(z_N^k)^{\mathsf{T}})^{\mathsf{T}}$, $\mb{q}^k=((q_1^k)^{\mathsf{T}},\ldots,(q_N^k)^{\mathsf{T}})^{\mathsf{T}}$.
	  \STATE \textbf{Parameters:} $\rho>0$, $\theta>0$, $P,\tilde{P}\succeq\mathbf{O}_N$, $\zeta^k>0$, $\eta^k>0$, $\tau^k \geq 1$, $\mb{a}^k\in \mathbb{R}^{\tau^k}$.
		\STATE \textbf{Initialization:}  % initialization
		\STATE Each node $i \in \mathcal{V}$ selects $q_i^0 \in \Re ^d$ such that $\sum _{i\in \mathcal{V}} q_i^0 = \mb{0}$ (or simply sets $q_i^0=0$).  
		\STATE Each node $i \in \mathcal{V}$ arbitrarily sets $x_i^0 \in \Re ^d$ and sends $x_i^0$ to every neighbor $j \in \mathcal{N}_i$.
		\FOR{$k \geq 0$} 
			\STATE Each node $i \in \mathcal{V}$ computes $z_i^k =  \rho \sum_{j \in \mathcal{N}_i \cup \{i\}} p_{ij} x_j^k+\nabla f_i(x_i^k) + \theta q_i^k $ and sends it to every  neighbor $j \in \mathcal{N}_i$.
			\STATE The nodes jointly compute $\mathbf{x}^{k+1} = \mb{x}^k - \zeta^k\mb{z}^k + \eta^k \text{MACC}(\mathbf{z}^k,P,\tau^k,\mb{a}^k)$. 
			\STATE Each node $i\! \in\! \mathcal{V}$ sends $x_i^{k+1}$ to every neighbor $j \!\in \!\mathcal{N}_i$. \\
			\STATE Each node $i \in \mathcal{V}$ computes $q_i^{k+1} = q_i^k + \rho \sum_{j\in \mathcal{N}_i \cup \{i\}} \tilde{p}_{ij} x_j^{k+1}$.
		\ENDFOR
	\end{algorithmic}
\end{algorithm}

% Noting that when the polynomial $P_{\tau^k}(H)$ is omitted, i.e., $G^k=\zeta^k \mb{I}_{Nd}$, MAP-Pro reduces to L-ADMM with $\zeta^k=\frac{1}{\gamma}, \rho=\alpha, \theta=\beta, H=L, \tilde{H} = \frac{\beta}{\alpha\gamma}L$.

\section{CONVERGENCE ANALYSIS}\label{convergence analysis}
In this section, we provide the convergence analysis of MAP-Pro under various conditions.

In order to present our convergence results, we introduce the following notations: Let $K = (\mb{I}_N - \frac{1}{N} \mb{1}_N \mb{1}_N ^{\mathsf{T}}) \otimes \mb{I}_d$, $L = \frac{1}{N}(\mb{1}_N \mb{1}_N ^{\mathsf{T}} \otimes \mb{I}_d)$, $\bar{x}^k = \frac{1}{N}(\mb{1}_N^{\mathsf{T}} \otimes \mb{I}_d) \mb{x}^k$, $\bar{\mb{x}}^k = L \mb{x}^k$, $\mb{g}^k = \nabla \tilde{f}(\mb{x}^k)$, $\bar{\mb{g}}^k = L \mb{g}^k$, $\mb{g}_0^k = \nabla \tilde{f}(\bar{\mb{x}}^k)$, $\bar{\mb{g}}_0^k = L \mb{g}_0^k  = \mb{1}_N \otimes \nabla f(\bar{x}^k)$, and $\mb{s}^k=\mb{q}^k+\frac{1}{\theta}\mb{g}_0^k$. Hence, we have $K^2=K$, $LH = \mathbf{O}_{Nd}$, $L\mb{q}^k = L \tilde{H}^{\frac{1}{2}} \mb{v}^k = \mathbf{0}_{Nd}$.

	Let $\tilde{H}, H$ be the Laplacian matrix of the connected graph $\mathcal{G}$. We first establish some results for the Laplacian matrix. From Assumption~\ref{assumtion H}, $\operatorname{Null} (\tilde{H}) = \operatorname{Null} (H) = \operatorname{Null} (K) = \mathcal{S}$, $\bar{\lambda}_K = 1$, we have $K\tilde{H} = \tilde{H}K = \tilde{H}$, $KH = HK = H$, and
	\begin{align}
		\mathbf{O}_{Nd} \preceq \underline{\lambda}_HK \preceq H \preceq \bar{\lambda}_HK, \label{H range}\\
		\mathbf{O}_{Nd} \preceq \underline{\lambda}_{\tilde{H}}K \preceq \tilde{H} \preceq \bar{\lambda}_{\tilde{H}}K. \label{tildeH range}
	\end{align}

	There exists an orthogonal matrix $\tilde{R}:=[r \hat{R}] \in \Re ^{N \times N}$ with $r = \frac{1}{\sqrt{N}} \mb{1}_N$ and $\hat{R} \in \Re ^{N \times (N-1)}$ such that $H = (\tilde{R} \Lambda_{H} \tilde{R}^{\mathsf{T}})\otimes \mb{I}_d$ and $\tilde{H} =(\tilde{R} \Lambda_{\tilde{H}} \tilde{R}^{\mathsf{T}})\otimes \mb{I}_d$, where $\Lambda_{H}$ and $\Lambda_{\tilde{H}}$ are diagonal matrices whose diagonal entries are the eigenvalues of $H$ and $\tilde{H}$, respectively. Here, we define $Q^k := (G^kH)^{\dagger}$ such that $\operatorname{Null} (Q^k) = \mathcal{S}$ and $G^k H Q^k = K$. By introducing an auxiliary matrix $\tilde{D}^k = (\tilde{R}\Lambda_{\tilde{D}^k} \tilde{R}^{\mathsf{T}})\otimes \mb{I}_{d} $, where $\Lambda_{\tilde{D}^k}:=\operatorname{diag}([0,\lambda_2(\tilde{D}^k), \dots, \lambda_N(\tilde{D}^k)])$, we rewrite
	\begin{equation}
		\tilde{H}=\tilde{D}^k G^k H. \label{tilde H}
	\end{equation}
	Note that we can determine $\tilde{D}^k\preceq \mb{I}_{Nd}$ such that the mixing matrix $\tilde{H}$ is time-invariant and satisfies Assumption~\ref{assumtion H}.

	We let $\Lambda_{G^k} = \operatorname{diag}([{\zeta^k}, \lambda_2(G^k), \dots, \lambda_N(G^k)])$, $G^k = ([r \hat{R}]\Lambda_{G^k}[r \hat{R}]^{\mathsf{T}}) \otimes \mb{I}_{d}$, and have matrices $H$, $\tilde{H}$, $Q^k$, $G^k$, $\tilde{D}^k$, $K$ commutative with each other in matrix multiplication. Especially, $\tilde{H}Q^k=Q^k \tilde{H}=\tilde{D}^k$. In addition, we have
	\begin{align}
		\mathbf{O}_{Nd} \preceq \underline{\lambda}_{G^k}K \preceq G^k K \preceq \hat{\lambda}_{G^k}K, \label{Gk range}\\
		\mathbf{O}_{Nd} \preceq \underline{\lambda}_{\tilde{D}^k}K \preceq \tilde{D}^k \preceq \bar{\lambda}_{\tilde{D}^k}K, \label{tildeDk range}
	\end{align}
	where $\hat{\lambda}_{G^k}:= \max \{\lambda_2(G^k), \dots,$ $ \lambda_N(G^k)\}<\zeta^k$.

Inspired by the proof in \cite{yi2022sublinear}, we provide the following Lemma, which generates a descent sequence, consisting of the consensus error and optimality error.

\begin{lemma}\label{lemma descent sequence}
	Suppose Assumptions~\ref{assumption smooth}--\ref{assumption polynomial} hold. Let $\{\mb{x}^k\}$ be the sequence generated by \eqref{q0}--\eqref{qk+1 original}. Then, for any $k \geq 0$,
	\begin{align}
		\tilde{V}^{k+1} \!-\! \tilde{V}^k\leq & \!- \|\mb{x}^k\|^2_{\hat{\lambda}_{G^k} ({\epsilon_3^k} - {\epsilon_4^k} \hat{\lambda}_{G^k})K }\!- \|\mb{s}^k \|^2_{ \hat{\lambda}_{G^k} ({\epsilon_5^k} - \epsilon_6 \hat{\lambda}_{G^k}) K} \notag\\
		&- {\zeta^k}(\epsilon_7 - {\epsilon_{10}^k} {\zeta^k}) \|\bar{\mb{g}}^k\|^2 - \frac{{\zeta^k}}{4}\|\bar{\mb{g}}_0^k\|^2, \displaybreak[0]\label{tilde Vk+1-tilde Vk}
	\end{align}
	where
	\begin{align}
		\label{def tildeV} \tilde{V}^k =& V^k - \|\mb{x}^{k}\|^2_{\hat{\lambda}_{G^k} ({\epsilon_1^k} + {\epsilon_2^k} \hat{\lambda}_{G^k})K },\\ 
		\label{def V} V^k =& \frac{1}{2} \|\mb{x}^k\|^2_K + \frac{1}{2}\|\mb{s}^k\|^2_{\frac{\theta}{\rho} G^kQ^k + G^kHQ^k }\notag\\ 
		&+ \langle \mb{x}^k, K\mb{s}^k \rangle + (f(\bar{x}^k) - f^*), \\
		{\kappa_1^k} =& \frac{\hat{\lambda}_{G^k}}{\underline{\lambda}_{G^k}}\geq 1, \quad \kappa_2 = \frac{\bar{\lambda}_{H}}{\underline{\lambda}_H}\geq 1, \notag\\
		{\epsilon_1^k} =& \rho\bar{\lambda}_{H}(1/(2\kappa_1^k\kappa_2)+\bar{\lambda}_{\tilde{D}^k}),\quad {\epsilon_2^k} = \rho(\theta+\rho\bar{\lambda}_H)\bar{\lambda}_{\tilde{D}^k}\bar{\lambda}_H,\notag\\
		{\epsilon_3^k} =& \rho \bar{\lambda}_{H}(\frac{1}{2{\kappa_1^k}\kappa_2}-\bar{\lambda}_{\tilde{D}^k})-(1+\bar{M}^2+\frac{\theta}{8\kappa_1^k})>0, \notag\\
		{\epsilon_4^k} =& \frac{9}{2}\rho^2\bar{\lambda}^2_H+\rho\bar{\lambda}_{\tilde{D}^k}\bar{\lambda}_{H}+\frac{1}{4}\notag\\
		&+\rho^2\bar{\lambda}_{\tilde{D}^k}\bar{\lambda}^2_{H}+ (3+\frac{3}{2}\rho\bar{\lambda}_{H})\bar{M}^2+\epsilon_2^k,\notag\\
		{\epsilon_5^k} =& \frac{\theta-1}{{2\kappa_1^k}},\quad \epsilon_6 = \frac{7\theta^2}{2}+\frac{1}{2}(3\theta^2+\rho^2\bar{\lambda}_{H}^2),\quad \epsilon_7 = \frac{1}{4},\notag\\
		{\epsilon_8^k} =& \frac{\bar{M}}{2}+\frac{\bar{M}^2}{\theta \underline{\lambda}_{\tilde{D}^k}}(\frac{1}{\rho\underline{\lambda}_{H}}+\frac{1}{\theta}),\notag\\
		{\epsilon_9^k} =& \frac{\bar{M}^2}{\theta \underline{\lambda}_{\tilde{D}^k}}(\frac{1}{\rho^2\underline{\lambda}_{H}^2}+\frac{1}{\theta^2})+\frac{\bar{M}^2}{2\theta^2\rho\underline{\lambda}_{H}},\quad {\epsilon_{10}^k} = {\epsilon_8^k}\!+\!{\epsilon_9^k}/\underline{\lambda}_{G^k}.\notag
	\end{align}
\end{lemma}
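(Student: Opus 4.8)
The plan is to treat $V^k$ in \eqref{def V} as a Lyapunov function and to track separately the evolution of the network-average and the disagreement components of the iterates. The structural fact I would exploit first is that, by Assumptions~\ref{assumtion H}--\ref{assumption polynomial} and the construction \eqref{Gk}, the matrices $H$, $\tilde H$, $G^k$, $Q^k$, $\tilde D^k$ and $K$ are simultaneously diagonalizable through the fixed orthogonal $\tilde R$ and mutually commute; this reduces every weighted-norm manipulation below to scalar inequalities on the common eigenbasis, where the spectral bounds \eqref{H range}, \eqref{Gk range}, \eqref{tildeDk range} and the identity \eqref{tilde H} apply directly. Using $LG^k=\zeta^k L$ (since $P_{\tau^k}(H)$ has no constant monomial, so $LH^t=\mathbf{O}$ for $t\ge1$) together with $L\mb q^k=\mathbf{0}$ and $LH=\mathbf{O}$, the primal update \eqref{xk+1 original} yields the clean average recursion $\bar{\mb x}^{k+1}=\bar{\mb x}^k-\zeta^k\bar{\mb g}^k$, i.e.\ the consensus component performs an exact gradient step with stepsize $\zeta^k$. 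Projecting \eqref{xk+1 original} with $K$ gives the disagreement recursion $K\mb x^{k+1}=K\mb x^k-G^kK\mb z^k$, and combining \eqref{qk+1 original} with $\mb s^k=\mb q^k+\tfrac1\theta\mb g_0^k$ gives $\mb s^{k+1}=\mb s^k+\rho\tilde H\mb x^{k+1}+\tfrac1\theta(\mb g_0^{k+1}-\mb g_0^k)$.

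Second, I would bound the increment of each of the four pieces of $V^k$. For the objective gap I would apply the $\bar M$-smoothness descent inequality to $f(\bar x^{k+1})-f(\bar x^k)$ along the exact gradient step, writing $\bar{\mb g}^k=\bar{\mb g}_0^k+(\bar{\mb g}^k-\bar{\mb g}_0^k)$ and controlling the mismatch through $\|\bar{\mb g}^k-\bar{\mb g}_0^k\|\le\bar M\|K\mb x^k\|$, which follows from smoothness since $L$ is a projection and $\mb x^k-\bar{\mb x}^k=K\mb x^k$; this is the mechanism that injects the $\bar M^2$-weighted consensus-error terms visible in $\epsilon_3^k,\epsilon_4^k$ and produces the leading $-\tfrac{\zeta^k}{4}\|\bar{\mb g}_0^k\|^2$ term. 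For the quadratic pieces $\tfrac12\|\mb x^k\|_K^2$ and $\tfrac12\|\mb s^k\|^2_{W^k}$ with $W^k:=\tfrac{\theta}{\rho}G^kQ^k+K$ (using $G^kHQ^k=K$), I would substitute the three recursions, expand the squares, and invoke \eqref{tilde H} in the form $\rho\tilde H=\rho\tilde D^kG^kH$ so that the weight $W^k$ interacts with the dual increment in a telescoping manner. All cross terms — among $\mb x^k$, $\mb s^k$, $\bar{\mb g}^k$, $\bar{\mb g}_0^k$, and the smoothness remainders $\mb g^k-\mb g_0^k$ and $\mb g_0^{k+1}-\mb g_0^k$ — would be split by weighted Young inequalities whose splitting weights are chosen so that the accumulated coefficients are exactly the $\epsilon_i^k$; the spectral bounds then replace each occurrence of $G^k$, $\tilde D^k$ by $\hat\lambda_{G^k}K$, $\underline\lambda_{G^k}K$, $\bar\lambda_{\tilde D^k}K$, which is where $\kappa_1^k,\kappa_2$ and the $\hat\lambda_{G^k}$ prefactors enter.

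Third, after collecting terms the raw bound on $V^{k+1}-V^k$ will contain a residual proportional to $\|\mb x^{k+1}\|_K^2$ with the time-varying coefficient $\hat\lambda_{G^k}(\epsilon_1^k+\epsilon_2^k\hat\lambda_{G^k})$. I would absorb exactly this residual by passing from $V^k$ to $\tilde V^k$ in \eqref{def tildeV}, so that this term is carried forward into $\tilde V^{k+1}$ and the remaining bound on $\tilde V^{k+1}-\tilde V^k$ collapses into the four grouped terms of \eqref{tilde Vk+1-tilde Vk}. The bookkeeping would be arranged so that the coefficients multiplying $\|\mb x^k\|_K^2$, $\|\mb s^k\|_K^2$, $\|\bar{\mb g}^k\|^2$ and $\|\bar{\mb g}_0^k\|^2$ are precisely $\hat\lambda_{G^k}(\epsilon_3^k-\epsilon_4^k\hat\lambda_{G^k})$, $\hat\lambda_{G^k}(\epsilon_5^k-\epsilon_6\hat\lambda_{G^k})$, $\zeta^k(\epsilon_7-\epsilon_{10}^k\zeta^k)$ and $\tfrac{\zeta^k}{4}$; the positivity $\epsilon_3^k>0$ indicated in the statement is the condition I would use to validate the completion-of-squares step that isolates the consensus-error descent, with $\hat\lambda_{G^k}<\zeta^k$ and $G^k\succ\mathbf{O}$ guaranteed by Assumption~\ref{assumption polynomial}.

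The main obstacle I anticipate is the exact bookkeeping in the second step: choosing the Young weights and regrouping the many cross terms so that the emergent coefficients coincide precisely with the intricate, $\bar M$- and spectrum-dependent expressions for $\epsilon_1^k,\dots,\epsilon_{10}^k$, rather than merely with constants of the right order. A secondary but related difficulty is the time variation of $G^k$ inside the $\mb s^k$-weight $W^k$: since $W^{k+1}\ne W^k$, the difference $\tfrac12\|\mb s^{k+1}\|^2_{W^{k+1}}-\tfrac12\|\mb s^k\|^2_{W^k}$ mixes a genuine change in $\mb s^k$ with a change of weight matrix, and it is the identity \eqref{tilde H} together with the commutativity and the correction term in $\tilde V^k$ that must be orchestrated to keep this under control. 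Everything else — the smoothness estimates, the projection identities, and the spectral substitutions — is routine once the simultaneous diagonalization is in place.
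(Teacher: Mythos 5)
Your plan follows the paper's proof essentially step for step: the same exact-gradient average recursion via $LG^k=\zeta^k L$, the same smoothness estimates $\|\mb{g}^k-\mb{g}_0^k\|\le\bar M\|\mb{x}^k\|_K$ and $\|\mb{g}_0^{k+1}-\mb{g}_0^k\|\le\zeta^k\bar M\|\bar{\mb{g}}^k\|$, the same Cauchy--Schwarz/Young bookkeeping on the four pieces of $V^k$ with spectral substitutions $G^k\preceq\hat{\lambda}_{G^k}K$ etc., and the same absorption of the residual $\|\mb{x}^{k+1}\|^2_{\hat{\lambda}_{G^k}(\epsilon_1^k+\epsilon_2^k\hat{\lambda}_{G^k})K}$ into the correction term defining $\tilde V^k$. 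The one obstacle you flag as serious --- the time variation of the $\mb{s}^k$-weight --- dissolves immediately in the paper: since $G^kQ^k=H^{\dagger}$ and $G^kHQ^k=K$, the weight $\frac{\theta}{\rho}G^kQ^k+G^kHQ^k$ is in fact time-invariant, which is precisely the first equality in the paper's bound \eqref{norm sk+1}.
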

\begin{proof}
	See Appendix~\ref{proof lemma descent sequence}.
\end{proof}

Based on Lemma~\ref{lemma descent sequence}, we derive the following convergence result for MAP-Pro under the nonconvex, smooth settings.

\begin{theorem}\label{theorem nonconvex}
	Suppose Assumptions~\ref{assumption smooth}--\ref{assumption polynomial} hold. Let $\{\mb{x}^k\}$ be the sequence generated by \eqref{q0}--\eqref{qk+1 original} under the following conditions: for all $k\geq0$, given $\kappa_1^k, \kappa_2\geq 1$, let $\bar{\lambda}_{\tilde{D}^k} \in (0,\frac{1}{2{\kappa_1^k} \kappa_2})$, $0<\underline{\lambda}_{\tilde{D}^k}\leq \bar{\lambda}_{\tilde{D}^k}$ , $\theta >\max_{k\geq 0}\{4\kappa_1^k\bar{M}^2,1\}$, $\rho > \max_{k\geq 0}\{(1+\bar{M}^2+\frac{\theta}{8\kappa_1^k})/(\frac{ \bar{\lambda}_{H}}{2{\kappa_1^k} \kappa_2} -  \bar{\lambda}_{H}\bar{\lambda}_{\tilde{D}^k}),\theta/\underline{\lambda}_H\}$, $\hat{\lambda}_{G^k} \in (0, \min_{k\geq 0}\{\frac{{\epsilon_3^k}}{{\epsilon_4^k}}, \frac{{\epsilon_5^k}}{\epsilon_6}, (\epsilon_7-{\epsilon_9^k}{\kappa_1^k})/{\epsilon_8^k}, \frac{1}{2{\epsilon_2^k}}(\sqrt{{\epsilon_1^k}^2\!+\!2{\epsilon_2^k}}-{\epsilon_1^k}),{\frac{1}{2{\epsilon_2^k}}(-{\epsilon_1^k}+\sqrt{(\epsilon_1^k)^2+2-\frac{1}{{\xi_1}}})}\})$, ${\zeta^k} \in (\hat{\lambda}_{G^k}, \frac{\epsilon_7}{{\epsilon_{10}^k}})$. Then,
	\begin{align}
		\label{theorem 1 sublinear}&\frac{\sum _{k=0}^{T}W^k}{T+1} \leq \frac{{\delta_1} \hat{V}^0}{\delta_2 (T+1)}, \\
		\label{theorem 1 function error}& (f(\bar{x}^{T+1}) - f^*) \leq {\delta_1} \hat{V}^0,\, \forall T\geq 0,
	\end{align}
	where
	\begin{align}
		\label{V hat}\hat{V}^k =& \|\mb{x}^k\|_K^2 + \|\mb{s}^k\|^2_K + (f(\bar{x}^k) - f^*), \\
		\label{Wk}W^k =& \|\mb{x}^{k} - \bar{\mb{x}}^k\|^2 +  \|\mb{s}^k\|^2_K + \|\bar{\mb{g}}^k\|^2 + \|\bar{\mb{g}}^k_0\|^2,\\
		% {\delta_3} =& \min_{k\geq 0}{\xi_2^k}-{\xi_3^k}, \text{ with } 		{\xi_2^k} = \frac{1}{2}\!-\!{\epsilon_1^k}\hat{\lambda}_{G^k}\!-\!{\epsilon_2^k}\hat{\lambda}^2_{G^k}>0\notag\\
		% {\xi_3^k} =& \frac{1}{2}({\xi_2^k}-{\xi_1}+(({\xi_2^k}-{\xi_1})+1)^{\frac{1}{2}})>0, \notag\\
		{\delta_1} =& \max_{k \geq 0}\frac{1}{2}+\xi_1, \text{ with } {\xi_1} = \frac{1}{2}(\frac{\theta}{\rho \bar{\lambda}_{H}}+1)>\frac{1}{2}, \notag\\
% , \quad {\xi_1} = \frac{1}{2}(\frac{\theta}{\rho \bar{\lambda}_{H}}+1)>\frac{1}{2},\notag\\
		\delta_2  =& \min_{k\in(0,T)} \{\hat{\lambda}_{G^k}({\epsilon_3^k} - {\epsilon_4^k} \hat{\lambda}_{G^k}),\hat{\lambda}_{G^k}({\epsilon_5^k} - \epsilon_6 \hat{\lambda}_{G^k}) , \notag\\
		&{\zeta^k}(\epsilon_7 - {\epsilon_{10}^k} {\zeta^k}),{\zeta^k}/4 \}.\notag
	\end{align}

\end{theorem}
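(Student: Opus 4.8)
The plan is to treat the one-step inequality \eqref{tilde Vk+1-tilde Vk} of Lemma~\ref{lemma descent sequence} as a Lyapunov recursion for $\tilde V^k$, after first checking that the stated parameter ranges (i) make every coefficient on its right-hand side strictly positive and (ii) make $\tilde V^k$ coercive, i.e.\ bounded below by $f(\bar x^k)-f^*\ge0$. First I would verify feasibility of the conditions: $\theta>1$ gives $\epsilon_5^k=\tfrac{\theta-1}{2\kappa_1^k}>0$, while $\bar\lambda_{\tilde D^k}<\tfrac1{2\kappa_1^k\kappa_2}$ together with the lower bound on $\rho$ forces $\epsilon_3^k>0$, and $\theta>4\kappa_1^k\bar M^2$ with the lower bound on $\rho$ yields $\epsilon_7-\epsilon_9^k\kappa_1^k>0$. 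Hence the prescribed interval for $\hat\lambda_{G^k}$ is nonempty; its first two entries make the coefficients $\hat\lambda_{G^k}(\epsilon_3^k-\epsilon_4^k\hat\lambda_{G^k})$ and $\hat\lambda_{G^k}(\epsilon_5^k-\epsilon_6\hat\lambda_{G^k})$ of $\|\mb x^k\|_K^2$ and $\|\mb s^k\|_K^2$ positive, and its third entry, recalling $\epsilon_{10}^k=\epsilon_8^k+\epsilon_9^k\kappa_1^k/\hat\lambda_{G^k}$ (via $\underline\lambda_{G^k}=\hat\lambda_{G^k}/\kappa_1^k$), makes the interval $\zeta^k\in(\hat\lambda_{G^k},\epsilon_7/\epsilon_{10}^k)$ nonempty so that $\zeta^k(\epsilon_7-\epsilon_{10}^k\zeta^k)>0$ and $\zeta^k/4>0$.

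Next I would establish the crucial coercivity bound $\tilde V^k\ge f(\bar x^k)-f^*\ge0$, the last inequality being Assumption~\ref{a1}. Substituting \eqref{def V} into \eqref{def tildeV} and setting aside $f(\bar x^k)-f^*$, it remains to prove nonnegativity of
\[
c^k\|\mb x^k\|_K^2+\langle\mb x^k,K\mb s^k\rangle+\tfrac12\|\mb s^k\|^2_{\frac{\theta}{\rho}G^kQ^k+G^kHQ^k},\qquad c^k:=\tfrac12-\hat\lambda_{G^k}(\epsilon_1^k+\epsilon_2^k\hat\lambda_{G^k}).
\]
Using the identity $G^kHQ^k=K$ and the spectral fact that $G^kQ^k$ acts on $\mathcal S^{\perp}$ with eigenvalues $1/\lambda_i(H)\ge1/\bar\lambda_H$, one obtains $\tfrac{\theta}{\rho}G^kQ^k+G^kHQ^k\succeq 2\xi_1 K$ on $\mathcal S^{\perp}$ with $\xi_1=\tfrac12(\tfrac{\theta}{\rho\bar\lambda_H}+1)$. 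Completing the square along the $K\mb x^k$ direction then reduces nonnegativity to the $2\times2$ condition that the Gram matrix with diagonal $(c^k,\xi_1)$ and off-diagonal $\tfrac12$ be positive semidefinite, i.e.\ $c^k\xi_1\ge\tfrac14$. This is exactly what the last two entries of the $\hat\lambda_{G^k}$-interval encode: the fourth enforces $c^k\ge0$ (the smaller quadratic root), and the fifth enforces the stronger threshold needed for semidefiniteness. Thus $\tilde V^k\ge f(\bar x^k)-f^*\ge0$ for all $k$, and in particular \eqref{tilde Vk+1-tilde Vk} shows $\tilde V^{k+1}\le\tilde V^k$.

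With positivity and coercivity in hand, I would finish by telescoping. Since $K=\mb I-L$ is the orthogonal projector onto $\mathcal S^{\perp}$, we have $\|\mb x^k\|_K^2=\|\mb x^k-\bar{\mb x}^k\|^2$, so the four terms on the right of \eqref{tilde Vk+1-tilde Vk} are precisely the four summands of $W^k$ in \eqref{Wk}, each carrying one of the coefficients whose minimum over $k$ is $\delta_2$; hence $\tilde V^k-\tilde V^{k+1}\ge\delta_2 W^k$. Summing over $k=0,\dots,T$ and using $\tilde V^{T+1}\ge0$ gives $\delta_2\sum_{k=0}^{T}W^k\le\tilde V^0$. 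For the initialization, since $\tilde V^0\le V^0$ by \eqref{def tildeV} (the subtracted term is nonnegative), applying Young's inequality to the cross term $\langle\mb x^0,K\mb s^0\rangle$ and the spectral upper bound on $\tfrac{\theta}{\rho}G^0Q^0+K$ yields $V^0\le\delta_1\hat V^0$ with $\hat V^0$ as in \eqref{V hat}. Dividing by $T+1$ gives \eqref{theorem 1 sublinear}, while \eqref{theorem 1 function error} follows immediately from $f(\bar x^{T+1})-f^*\le\tilde V^{T+1}\le\tilde V^0\le\delta_1\hat V^0$.

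I expect the main obstacle to be the coercivity step: $\tilde V^k$ is genuinely indefinite-looking because of the cross term $\langle\mb x^k,K\mb s^k\rangle$ and the subtracted term $\|\mb x^k\|^2_{\hat\lambda_{G^k}(\epsilon_1^k+\epsilon_2^k\hat\lambda_{G^k})K}$, so the delicate work is to align the completion-of-square threshold $c^k\xi_1\ge\tfrac14$ exactly with the two square-root bounds prescribed for $\hat\lambda_{G^k}$, and to pin down the constant $\delta_1$ in the initialization bound via the correct spectral estimate on $G^0Q^0$ relative to $K$.
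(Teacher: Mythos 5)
Your proposal is correct and follows essentially the same route as the paper's own proof: checking feasibility/positivity of the coefficients under the stated parameter ranges (including the identity $\epsilon_{10}^k\hat\lambda_{G^k}=\epsilon_8^k\hat\lambda_{G^k}+\epsilon_9^k\kappa_1^k$ that makes the $\zeta^k$-interval nonempty), lower-bounding $\tilde{V}^k$ via $\frac{\theta}{\rho}G^kQ^k+G^kHQ^k\succeq 2\xi_1K$ with the last two $\hat\lambda_{G^k}$ thresholds enforcing $\xi_2^k>0$ and $\xi_1\xi_2^k>\frac14$, upper-bounding $\tilde{V}^0\leq\delta_1\hat{V}^0$, and telescoping \eqref{tilde Vk+1-tilde Vk} after identifying its right-hand side with $\delta_2 W^k$. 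The only cosmetic difference is that the paper packages your $2\times2$ Gram/completion-of-squares condition as an AM--GM bound with the balancing parameter $\xi_3^k$, thereby proving the slightly stronger estimate $\tilde{V}^k\geq\delta_3\hat{V}^k$ (which it reuses in Theorem~\ref{theorem pl}), whereas you establish only the nonnegativity $\tilde{V}^k\geq f(\bar{x}^k)-f^*\geq 0$ that Theorem~\ref{theorem nonconvex} actually requires.
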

\begin{proof}
	See Appendix~\ref{proof theorem nonconvex}.
\end{proof}

From \eqref{theorem 1 sublinear}, we have $\min_{k\in[0,T]}W^k=\mathcal{O}(1/T)$, which indicates that MAP-Pro converges to a stationary solution at a sublinear rate $\mathcal{O}(1/T)$. Similar sublinear rates have been established in \cite{hong2017prox,sun2018distributed,sun2019distributed,yi2022sublinear,yi2021linear} for smooth nonconvex problems.

Note that for the parameter selections, we require the knowledge of the global information $\bar{M}$, $\bar{\lambda}_H$ and $\underline{\lambda}_H$, which can be collectively found by via local communication between neighboring nodes \cite{Wu2020ASP}. Subsequently, we illustrate how to determine the matrices $G^k$ and $\tilde{H}$ in the iterations \eqref{q0}--\eqref{qk+1 original} under the parameter settings in Theorem~\ref{theorem nonconvex}. Based on \eqref{Gk}, we first select appropriate $a_1^k,\dots, a_{\tau^k}^k\in \mathbb{R}$ for $k\geq 0$ such that $P_{\tau^k}(H)\succeq \mb{O}$, which can be simply satisfied by choosing $a_1^k,\dots, a_{\tau^k}^k>0$, and thus each node is able to locally attain $\bar{\lambda}_{P_{\tau^k}(H)}$ and $\underline{\lambda}_{P_{\tau^k}(H)}$. By fixing $\kappa_1^k \geq 1$, we can successively determine $\bar{\lambda}_{\tilde{D}^k}$, $\theta$, $\rho$, $\hat{\lambda}_{G^k}$ and $\zeta^k$. For such $\kappa_1^k$, there always exists $\eta^k\!\!=\!\!\zeta^k(\kappa_1^k\!\!-\!1)/(\kappa_1^k\bar{\lambda}_{P_{\tau^k}(H)}\!-\!\underline{\lambda}_{P_{\tau^k}(H)})\!<\!\zeta^k/\bar{\lambda}_{P_{\tau^k}(H)}$, which satisfies Assumption~\ref{assumption polynomial}. For the mixing matrix $\tilde{H}$, we can simply set $\tilde{H}=\bar{\alpha}H$, where $\bar{\alpha} \in (0,\underline{\lambda}_{G^k}/(2\kappa_1^k \kappa_2))$. To see this, we let $\Lambda_{\tilde{D}^k}=\operatorname{diag}([0,\bar{\alpha}/\lambda_2(G^k),\dots,\bar{\alpha}/\lambda_N(G^k)])$ such that $\bar{\lambda}_{\tilde{D}^k}<1/(2\kappa_1^k \kappa_2)$. Hence, Assumption~\ref{assumtion H} is satisfied.

Next, we consider the convergence result of MAP-Pro under the P-{\L} condition that satisfies the following assumption.
\begin{assumption}\label{PL condition}
	The global cost function $f(x)$ satisfies the P-{\L} condition with constant $\nu>0$, i.e.,
	\begin{equation}
		\|\nabla f(x)\|^2 \geq 2\nu (f(x) - f^*), \quad \forall x \in \Re ^d. \label{pl}
	\end{equation}
\end{assumption}

Note that the P-{\L} condition is weaker than strong convexity, and can guarantee the global optimum without the convexity. The following theorem establishes the convergence result for MAP-Pro under the additional P-{\L} condition.
\begin{theorem}\label{theorem pl}
	Suppose Assumptions~\ref{assumption smooth}--\ref{PL condition} hold. Let $\{\mb{x}^k\}$ be the sequence generated by \eqref{q0}--\eqref{qk+1 original} with the same parameters given in Theorem~\ref{theorem nonconvex}. Then, 
	\begin{equation}
		\|\mb{x}^k - \bar{\mb{x}}^k\|^2 + f(\bar{x}^k) - f^* \leq (1-\delta)^{k} \frac{\delta_1}{{\delta_3}}\hat{V}^0, \quad \forall k \geq 0,
		\label{theorem 2 linear}
	\end{equation}
	where 
	\begin{align*}
		{\delta_3} =& \min_{k\geq 0}{\xi_2^k}-{\xi_3^k}, \text{ with } 		{\xi_2^k} = \frac{1}{2}\!-\!{\epsilon_1^k}\hat{\lambda}_{G^k}\!-\!{\epsilon_2^k}\hat{\lambda}^2_{G^k}>0,\notag\\
		{\xi_3^k} =& \frac{1}{2}({\xi_2^k}-{\xi_1}+(({\xi_2^k}-{\xi_1})^2+1)^{\frac{1}{2}})>0, \notag\\
		0<&\delta = {\delta_4}/{\delta_1}<1, \text{ with } \notag\\
		\delta_4 =& \min_{k\geq 0} \{\hat{\lambda}_{G^k}({\epsilon_3^k} - \hat{\lambda}_{G^k} {\epsilon_4^k}) , \hat{\lambda}_{G^k}({\epsilon_5^k} - \hat{\lambda}_{G^k}\epsilon_6), \frac{\nu}{2}{\zeta^k}\}.
	\end{align*}
	The definitions of $\delta_1$ is given in Theorem~\ref{theorem nonconvex} and $\epsilon_1^k$, $\epsilon_2^k$, $\epsilon_3^k$, $\epsilon_4^k$, $\epsilon_5^k$, $\epsilon_6$ are defined in Lemma~\ref{lemma descent sequence}.

\end{theorem}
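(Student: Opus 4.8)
The plan is to promote the single-step descent of Lemma~\ref{lemma descent sequence} into a geometric decay of $\tilde{V}^k$ by invoking the P-{\L} inequality, and then to show that $\tilde{V}^k$ is equivalent to $\hat{V}^k$ up to the constants $\delta_1$ and $\delta_3$, so that the decay transfers to the left-hand side of \eqref{theorem 2 linear}. Throughout I use that $K$ is the orthogonal projector onto $\mathcal{S}^{\perp}$, so $\|\mb{x}^k\|^2_K=\|\mb{x}^k-\bar{\mb{x}}^k\|^2$, and that $\hat{V}^k\geq0$ by \eqref{V hat}.

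First I would convert the gradient terms in \eqref{tilde Vk+1-tilde Vk} into a function-value gap. Since the parameters of Theorem~\ref{theorem nonconvex} make $\epsilon_7-\epsilon_{10}^k\zeta^k>0$, the term $-\zeta^k(\epsilon_7-\epsilon_{10}^k\zeta^k)\|\bar{\mb{g}}^k\|^2$ is nonpositive and may be discarded. For the remaining gradient term, Assumption~\ref{PL condition} applied to $\|\bar{\mb{g}}_0^k\|^2$, which is a positive multiple of $\|\nabla f(\bar{x}^k)\|^2$, gives through \eqref{pl} the bound $-\tfrac{\zeta^k}{4}\|\bar{\mb{g}}_0^k\|^2\leq-\tfrac{\nu}{2}\zeta^k(f(\bar{x}^k)-f^*)$. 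Substituting this into \eqref{tilde Vk+1-tilde Vk} and lower-bounding each of the three surviving coefficients by its infimum over $k$ yields $\tilde{V}^{k+1}-\tilde{V}^k\leq-\delta_4\hat{V}^k$, where $\delta_4$ is exactly the constant in the statement and is positive because $\hat{\lambda}_{G^k}<\min\{\epsilon_3^k/\epsilon_4^k,\epsilon_5^k/\epsilon_6\}$ under the parameters of Theorem~\ref{theorem nonconvex}.

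Next I would establish the two-sided estimate $\delta_3\hat{V}^k\leq\tilde{V}^k\leq\delta_1\hat{V}^k$. From \eqref{def tildeV}--\eqref{def V} the coefficient of $\|\mb{x}^k\|^2_K$ in $\tilde{V}^k$ equals $\xi_2^k=\tfrac12-\hat{\lambda}_{G^k}(\epsilon_1^k+\epsilon_2^k\hat{\lambda}_{G^k})$, which is positive precisely by the upper bound imposed on $\hat{\lambda}_{G^k}$ in Theorem~\ref{theorem nonconvex}. Because $G^kHQ^k=K$ and $G^kQ^k$ has eigenvalues $\lambda^{-1}$ with $\lambda\in[\underline{\lambda}_H,\bar{\lambda}_H]$ on $\mathcal{S}^{\perp}$, the weight $\tfrac{\theta}{\rho}G^kQ^k+G^kHQ^k$ acting on $\mb{s}^k$ has eigenvalues $\tfrac{\theta}{\rho\lambda}+1$, so $\tfrac12\|\mb{s}^k\|^2_{\frac{\theta}{\rho}G^kQ^k+G^kHQ^k}$ lies between $\xi_1\|\mb{s}^k\|^2_K$ and a corresponding larger multiple of $\|\mb{s}^k\|^2_K$. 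Bounding the indefinite cross term by $|\langle\mb{x}^k,K\mb{s}^k\rangle|\leq\|\mb{x}^k\|_K\|\mb{s}^k\|_K$ reduces each inequality to positive semidefiniteness of a $2\times2$ form in $(\|\mb{x}^k\|_K,\|\mb{s}^k\|_K)$; the lower estimate requires $(\xi_2^k-\delta_3)(\xi_1-\delta_3)\geq\tfrac14$, whose smaller admissible root is $\delta_3=\xi_2^k-\xi_3^k$ with $\xi_3^k$ as stated, and retaining $(1-\delta_3)(f(\bar{x}^k)-f^*)\geq0$ needs $\delta_3\leq1$; the upper estimate furnishes $\delta_1$ analogously.

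Finally I would combine the pieces. Using the upper estimate, $\hat{V}^k\geq\tilde{V}^k/\delta_1$, so the descent becomes $\tilde{V}^{k+1}\leq(1-\delta_4/\delta_1)\tilde{V}^k=(1-\delta)\tilde{V}^k$; since $\tilde{V}^k\geq\delta_3\hat{V}^k\geq0$ the recursion is well defined and $0<\delta<1$ follows from $0<\delta_4<\delta_1$. Iterating and invoking the upper estimate at $k=0$ gives $\tilde{V}^k\leq(1-\delta)^k\delta_1\hat{V}^0$, while the lower estimate gives $\|\mb{x}^k-\bar{\mb{x}}^k\|^2+f(\bar{x}^k)-f^*\leq\hat{V}^k\leq\tilde{V}^k/\delta_3$, and chaining these yields \eqref{theorem 2 linear}. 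I expect the principal difficulty to be the two-sided equivalence $\delta_3\hat{V}^k\leq\tilde{V}^k\leq\delta_1\hat{V}^k$, since $\tilde{V}^k$ carries the indefinite cross term $\langle\mb{x}^k,K\mb{s}^k\rangle$ together with the subtracted term $-\|\mb{x}^k\|^2_{\hat{\lambda}_{G^k}(\epsilon_1^k+\epsilon_2^k\hat{\lambda}_{G^k})K}$, so its comparability with $\hat{V}^k$ hinges on the carefully tuned smallness of $\hat{\lambda}_{G^k}$ from Theorem~\ref{theorem nonconvex} that keeps $\xi_2^k>0$ and the quadratic forms definite; the P-{\L} step itself is immediate once the gradient terms are isolated.
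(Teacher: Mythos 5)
Your proposal follows essentially the same route as the paper's proof: discard the $\|\bar{\mb{g}}^k\|^2$ term from Lemma~\ref{lemma descent sequence} via \eqref{epsilon7-epsilon10}, apply the P-{\L} inequality to $\|\bar{\mb{g}}_0^k\|^2$ to obtain $\tilde{V}^{k+1}\leq \tilde{V}^k-\delta_4\hat{V}^k$, and combine this with the two-sided comparison $\delta_3\hat{V}^k\leq\tilde{V}^k\leq\delta_1\hat{V}^k$ (which the paper already establishes as \eqref{delta1}--\eqref{delta2} in the proof of Theorem~\ref{theorem nonconvex} and simply reuses) to iterate $\tilde{V}^{k+1}\leq(1-\delta)\tilde{V}^k$ and conclude \eqref{theorem 2 linear}. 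Your inline $2\times 2$ quadratic-form treatment of the cross term $\langle\mb{x}^k,K\mb{s}^k\rangle$ is exactly the paper's AM-GM split with the optimal parameter, reproducing the same $\xi_3^k$, so the argument is correct and coincides with the paper's.
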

\begin{proof}
	See Appendix~\ref{proof theorem pl}.
\end{proof}

As is shown in \eqref{theorem 2 linear}, MAP-Pro enjoys a linear convergence to the global optimum under the P-{\L} condition. Similar linear rates have been achieved in \cite{Alghunaim2021AUA}, \cite{yi2021linear} and \cite{yi2022sublinear} under the same conditions. 
%  in the feasible parameter range, the smaller $\rho$, $\theta$ and ${\kappa_1^k}$ with the larger $\bar{\lambda}_{\tilde{D}^k}$ and $\zeta^k$ would lead to the faster convergence.
When the polynomial $P_{\tau^k}(H)$ is omitted, i.e., $G^k=\zeta^k \mb{I}_{Nd}$, MAP-Pro reduces to L-ADMM \cite{yi2022sublinear} with $\zeta^k=\frac{1}{\gamma}, \rho=\alpha, \theta=\beta, H=L, \tilde{H} = \frac{\beta}{\alpha\gamma}L$.

%%%%%%%%%%%%%%%%%%%%%%%%%%%%%%%%%%%%%%%%%%%%%%%%%%%%%%%%%%%%%%%%%%%%%%%%%%%%%%%%
\section{Design of Mixing Acceleration}\label{section discuss}
In this section, we discuss the design of the polynomial $P_{\tau^k}(H)$ to further accelerate the convergence.

% Here we denote the ratio between the second smallest and the largest eigenvalue of $P_{\tau^k}(H)$ as its eigengap $\gamma$ and obviously $0<\gamma = 1/{\kappa_1^k}\leq 1$. 
 
Here, we let $\kappa_1:=\max_{k\geq 0}\kappa_1^k$. In Theorem~\ref{theorem nonconvex}, we let $\theta=\mathcal{O}(\kappa_1\bar{M}^2)$ and have $\rho=\mathcal{O}(\kappa_1\bar{M}^2/\underline{\lambda}_H)$. Thus, we obtain $\delta_1=\mathcal{O}(\kappa_1\kappa_2)$ and $1/\delta_2=\mathcal{O}(\kappa_1^2\kappa_2^2\bar{M}^2)$. Specifically, $\min_{k\in[0,T]}W^k=\mathcal{O}(\kappa_1^3\kappa_2^3/T)$. Besides, since ${\kappa_1^k}=(\zeta^k-\eta^k\underline{\lambda}_{P_{\tau^k}(H)})/(\zeta^k-\eta^k\bar{\lambda}_{P_{\tau^k}(H)})$, for fixed $\zeta^k$ and $\eta^k$, we can optimize $\kappa_1^k$ by minimizing the eigengap $\kappa_p^k:=\bar{\lambda}_{P_{\tau^k}(H)}/\underline{\lambda}_{P_{\tau^k}(H)}$. On the other hand, the polynomial $P_{\tau^k}(H)$ corresponds to a weighted interaction graph, and smaller ${\kappa_p^k}$ yields higher density of the graph, which facilitates information fusion. Similar statements have been established in \cite{scaman2017optimal} and \cite{xu2020accelerated} (for strongly convex, smooth problems and convex, smooth problems, respectively) that the convergence rates of their algorithms depend on $\kappa_p^k$. Specifically, smaller ${\kappa_p^k}$ induces faster convergence. 

Motivated by this, we introduce the well-known Chebyshev acceleration \cite{auzinger2011iterative}  to our distributed setting, which is capable to generate a favorable polynomial $P_{\tau^k}(H)$ for a fixed $\tau^k$. Subsequently, we demonstrate that the acceleration scheme can be implemented over the graph $\mathcal{G}$, as is detailed in Oracle~\ref{oracle cheb}.

\floatname{algorithm}{Oracle}
\begin{algorithm}[t] 
        \renewcommand{\thealgorithm}{$\mathcal{B}$}
        \caption{Chebyshev Acceleration} \label{oracle cheb}
        \begin{algorithmic}[1]
                \STATE \textbf{Input:} $\mb{y}=(y_1^{\mathsf{T}},\ldots,y_N^{\mathsf{T}})^{\mathsf{T}}\in \Re^{Nd}$, $P\succeq\mathbf{O}_N$, $\tau^k \geq 1$, $c_1 = \frac{\kappa_2+1}{\kappa_2-1}$.\\
                \STATE \textbf{Procedure} $\text{CACC}(\mathbf{y},P,\tau^k)$ \\
                \STATE Each node $i \in \mathcal{V}$ computes $b^0 = 1$, $b^1 = c_1$.\\ 
	        	\STATE Each node $i \in \mathcal{V}$ maintains a variable $y_i^t$, sets $y_i^0 = y_i$ and sends it to every  neighbor $j \in \mathcal{N}_i$.\\
				\STATE Each node $i \in \mathcal{V}$ computes $y_i^1 = c_1y_i^0 - c_1\sum_{j\in {\mathcal{N}}_i \cup \{i\}}$ $ p_{ij} y_j^0$ and sends it to every  neighbor $j \in \mathcal{N}_i$.\\
                \FOR{$t=1:\tau^k -1$} % For 语句，需要和EndFor对应
                \STATE Each node $i \in \mathcal{V}$ computes $b^{t+1}= 2c_1 b^t-b^{t-1}$.\\
                \STATE Each node $i \in \mathcal{V}$ computes $y_i^{t+1}= 2c_1 y_i^t-y_i^{t-1}-2c_1\sum_{j\in {\mathcal{N}}_i \cup \{i\}} p_{ij} y_j^t$ and sends it to every  neighbor $j \in \mathcal{N}_i$.\\
                \ENDFOR
                \STATE \textbf{Output:} Each node $i \in \mathcal{V}$ returns $y_i^0 - y_i^{\tau^k}/b^{\tau^k}$, so that $P_{\tau^k}(H)\mb{y}=((y_1^0 - y_1^{\tau^k}/b^{\tau^k})^{\mathsf{T}},\ldots,(y_N^0 - y_N^{\tau^k}/b^{\tau^k})^{\mathsf{T}})^{\mathsf{T}}$.\\
                \STATE \textbf{End procedure}\\
        \end{algorithmic}
\end{algorithm}

By substituting $\text{CACC}(\mathbf{z}^k\!,P,\tau^k)$ for $\text{MACC}(\mathbf{z}^k\!,P,\tau^k,\mb{a}^k)$  in Line~8 of Algorithm~\ref{inner primal}, we derive the MAP-Pro equipped with Chebyshev acceleration, referred to as MAP-Pro-CA. 
%Note that, for Oracle~\ref{oracle acc}, each agent $i\in \mathcal{V}$ has to store the historical information of $y_i^k$ for $k=1,\dots,\tau$, which requires extra memory. In the case where the polynomial $P_{\tau^k}(H)$ can be factorized (for instance, $P_{\tau^k}(H) = H\Pi_{i=1}^{\tau^k-1}(\alpha_i^k\mb{I}+\beta_i^kH)$, for some $\alpha_i^k,\beta_i^k\in \mathbb{R}\forall i \in \mathcal{V}$), it is capable to  implement it in $\tau^k$ times of inner loops without excessive memory cost like the Chebyshev acceleration scheme in Oracle~\ref{oracle cheb}. 
As is shown in \cite{xu2020accelerated}, the optimal number of iterations in each inner loop is proportional to $\sqrt{\kappa_2}$ that makes $\kappa_p^k=\mathcal{O}(1)$, and thus $\kappa_1=\mathcal{O}(1)$. 

In Table~\ref{table convergence rate}, MAP-Pro-CA outperforms MAP-Pro,  \cite{yi2021linear} and \cite{yi2022sublinear} in the dependence of network topology, i.e., MAP-Pro-CA requires the smallest degree of $\kappa_1$ and $\kappa_2$, so that the network sparsity has the least influence on MAP-Pro-CA.

\begin{table}[t]
	\centering
	\caption{Comparison in convergence rates. Here, we measure $\min_{k\in[0,T]}W^k$ for nonconvex settings and $\|\mb{x}^k - \bar{\mb{x}}^k\|^2 + (f(\bar{x}^k) - f^*)=\mathcal{O}(1-\delta)^k$ for P-{\L} condition. }
	\label{table convergence rate} 
	\begin{tabular}{ccc}
		\toprule  % 顶部线
		&$\min_{k\in[0,T]}W^k$&$\delta$\\ 
		\midrule  % 中部线
		MAP-Pro& $\mathcal{O}(\frac{\kappa_1^3\kappa_2^3\bar{M}^2}{T})$ &$\mathcal{O}(\frac{1}{\nu\kappa_1^3\kappa_2^3\bar{M}^2})$ \\
		\midrule
		\cite{yi2021linear}, \cite{yi2022sublinear} &$\mathcal{O}(\frac{\kappa_2^4\bar{M}^2\max\{\bar{\lambda}_H^3,1\}}{T})$ & $\mathcal{O}(\frac{1}{\nu\kappa_2^4\bar{M}^2\max\{\bar{\lambda}_H^3,1\}})$\\
		\midrule
		MAP-Pro-CA& $\mathcal{O}(\frac{\kappa_2^3\bar{M}^2}{T})$& $\mathcal{O}(\frac{1}{\nu\kappa_2^3\bar{M}^2})$\\
		\bottomrule  % 底部线
	\end{tabular}
\end{table}

%%%%%%%%%%%%%%%%%%%%%%%%%%%%%%%%%%%%%%%%%%%%%%%%%%%%%%%%%%%%%%%%%%%%%%%%%%%%%%%%
\section{NUMERICAL EXPERIMENT}\label{section simulation}
In this section, we evaluate the convergence performance of MAP-Pro and MAP-Pro-CA via a numerical example. 

\begin{figure*}[ht]
	\centering
	\begin{minipage}{0.48\linewidth}
	\begin{subfigure}[]{
	    \centering
		\includegraphics[width=3in]{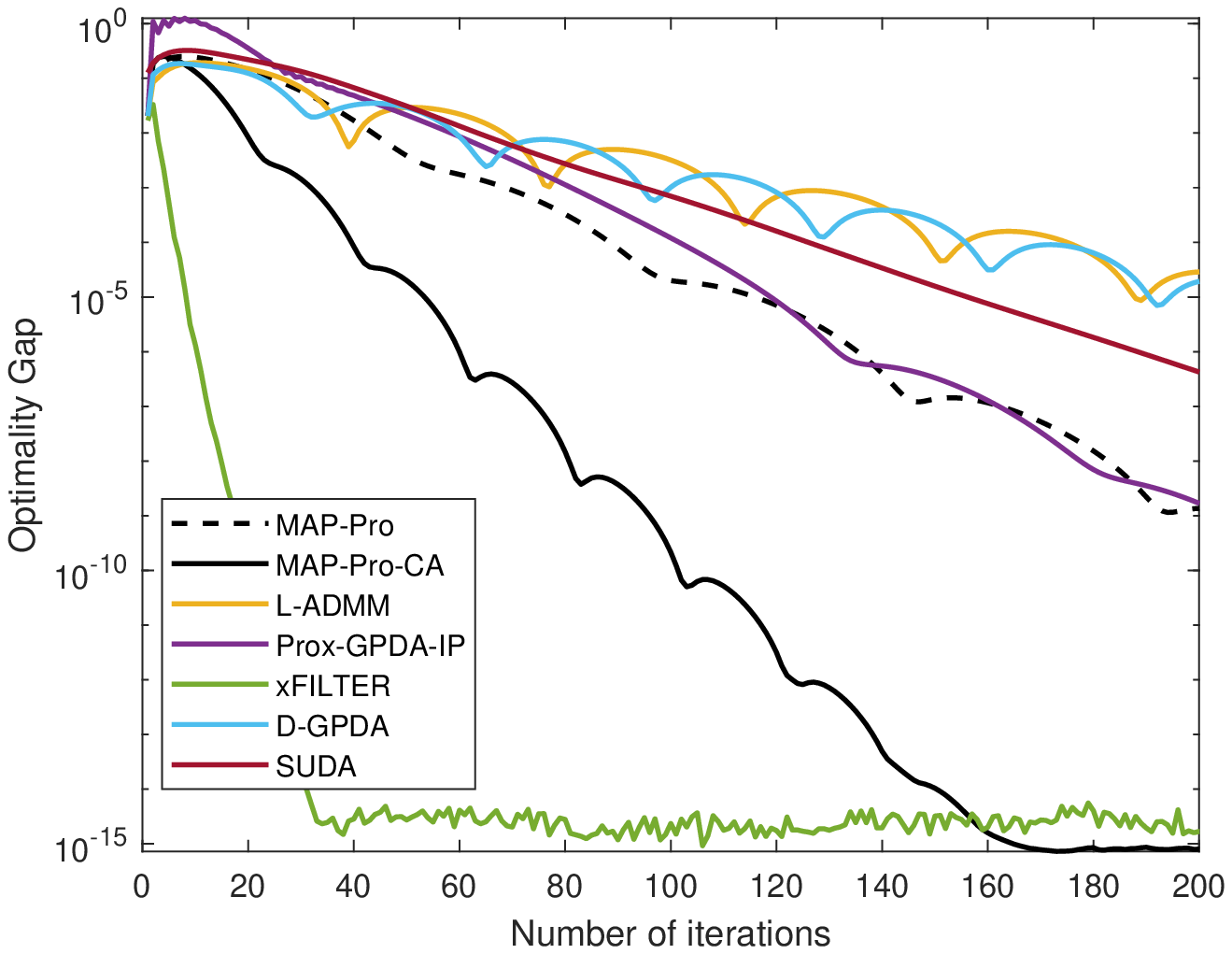}
% 		\caption{f1.}
		\label{iter}}
	\end{subfigure}
	\end{minipage}%
	\begin{minipage}{0.48\linewidth}
	\begin{subfigure}[]{
		\centering
		\includegraphics[width=3in]{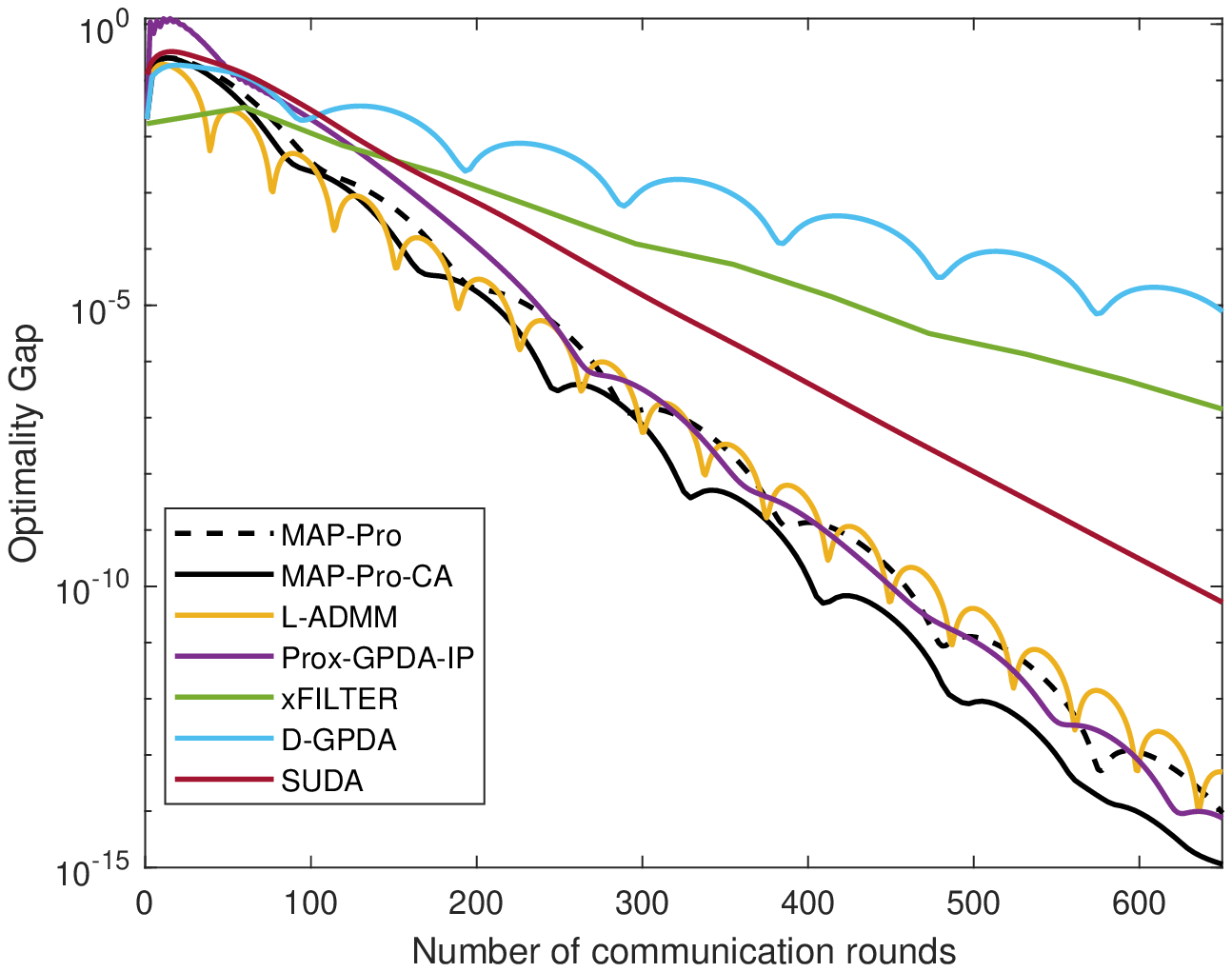}
% 		\caption{f2.}
		\label{comm}
		}\end{subfigure}
	\end{minipage}
	\caption{Convergence performance of MAP-Pro, MAP-Pro-CA, L-ADMM, Prox-GPDA-IP, xFILTER, D-GPDA, and SUDA.\label{figure simulation}}
\end{figure*}

We consider the distributed binary classification problem with nonconvex regularizers \cite{antoniadis2011penalized}, which satisfies Assumptions~\ref{assumption smooth}--\ref{assumption polynomial} and takes the form of \eqref{p1} with
\begin{equation*}
	f_i(x)=\frac{1}{m}\sum_{s}^{m}\log(1+\exp(-y_{is}x^{\mathsf{T}}z_{is}))+\sum_{t=1}^{d}\frac{\lambda\mu([x]_t)^2}{1+\mu([x]_t)^2}.
\end{equation*}
Here, $m$ is the number of data samples of each node, $y_{is}\in \{-1,1\}$ and $z_{is}\in \mathbb{R}^d$ denote the label and the feature for the $s$-th data sample of node $i$, respectively. In the simulation, we set $N=20$, $d=5$, $m=200$ with the regularization parameters $\lambda=0.001$ and $\mu=1$. Besides, we randomly generate $y_{is}$ and $z_{is}$ for each node $i$, and randomly construct the connected graph $\mathcal{G}$ containing 26 edges.

We compare our proposed MAP-Pro and MAP-Pro-CA with the distributed linearized ADMM (L-ADMM) \cite{yi2022sublinear}, the first-order proximal primal-dual algorithms SUDA \cite{Alghunaim2021AUA}, Prox-GPDA-IP \cite{hong2017prox} and D-GPDA \cite{sun2018distributed}, as well as the xFILTER algorithm \cite{sun2019distributed}, which utilizes Chebyshev iterations to approximate the minimization step. In SUDA, we let $A=I-H,B=H^{1/2},C=I$; In MAP-Pro, we let $P_{\tau^k}(H)=H$; in MAP-Pro-CA, we fix $\tau^k=3$ for Chebyshev acceleration; In Prox-GPDA-IP, we set $\beta^k=0.075\log(k)$. We hand-optimize the parameters of these algorithms, and measure their performance by the optimality gap given by 
\vspace{-0.05cm}
\begin{equation*}
	\text{optimality gap:}\quad \|\nabla \tilde{f}(\mb{x})\|^2+\|H^{\frac{1}{2}}\mb{x}\|^2.
\end{equation*}
\vspace{-0.4cm}

Figure~\ref{iter} illustrates the optimality gap versus the number of iterations. As MAP-Pro, MAP-Pro-CA and xFILTER require inner loops in each iteration, which increase the communication cost, we also compare their optimality gaps with respect to the communication rounds in Figure~\ref{comm}. Note that each node transmits a local vector to its neighbors in a communication round. As is shown in Figure~\ref{iter}, xFILTER reaches the optimality within $40$ iterations; however, it requires $58$ inner loops per iteration in this numerical example, so that in Figure~\ref{comm}, its performance is worse than MAP-Pro-CA, MAP-Pro, Prox-GPDA-IP, SUDA and L-ADMM regarding communication cost. In contrary, our proposed MAP-Pro-CA only conducts $3$ inner loops in each primal update. It is faster than MAP-Pro, Prox-GPDA-IP, SUDA, L-ADMM and D-GPDA in terms of the number of iteration and has the best communication efficiency in solving the problem. Besides, MAP-Pro, with slower convergence than MAP-Pro-CA yet a simpler $P_{\tau^k}(H)$, has a similar convergence rate as Prox-GPDA-IP, which requires an increasing penalty parameter, and is faster than SUDA, L-ADMM, x-FILTER and D-GPDA.

%On the other hand, compared with those one-communication-one-computation algorithms (i.e., Prox-GPDA-IP, L-ADMM and D-GPDA), our algorithms conduct multiple communication rounds after each computation step. Thus, within the same communication rounds, our algorithms is able to achieve the faster convergence with less computation for local gradients.

%%%%%%%%%%%%%%%%%%%%%%%%%%%%%%%%%%%%%%%%%%%%%%%%%%%%%%%%%%%%%%%%%%%%%%%%%%%%%%%%
\section{CONCLUSIONS}\label{section conclusion}

We have designed MAP-Pro, a mixing-accelerated primal-dual proximal algorithm for addressing nonconvex, smooth distributed optimization. Different from the existing primal-dual methods, MAP-Pro incorporates a time-varying mixing polynomial into an augmented-Lagrangian-like function. Such design intends to facilitate information mixing among the nodes, and is highly efficient especially when the communication cost is mild. We provide sublinear and linear convergence rates of MAP-Pro under various conditions. With Chebyshev acceleration, MAP-Pro-CA improves the convergence results, which outperform those of several existing methods with respect to network topology dependence. Finally, we demonstrate the superior convergence performance of MAP-Pro-CA and MAP-Pro via comparative numerical experiments.

%%%%%%%%%%%%%%%%%%%%%%%%%%%%%%%%%%%%%%%%%%%%%%%%%%%%%%%%%%%%%%%%%%%%%%%%%%%%%%%%
\section{APPENDIX} \label{appendix}
\subsection{Proof of Lemma~\ref{lemma descent sequence}}\label{proof lemma descent sequence}
We first note that $V^k$ is well-defined since $f^* > -\infty$ as assumed in Assumption~\ref{a1}.

From \eqref{Gk} we have 
	\begin{align}
		LG^k = & L(\zeta^k \mathbf{I}_{Nd}-\eta^k P_{\tau^k}(H))=\zeta^k L. \label{TGk}
	\end{align}
It follows from \eqref{zk original}, \eqref{xk+1 original}, \eqref{TGk} and $LH=0$ that 
\begin{align}
	\bar{\mb{x}}^{k+1} - \bar{\mb{x}}^k = -L G^k(\mb{g}^k + \rho H \mb{x}^k + \mb{q}^k) = - {\zeta^k}\bar{\mb{g}}^k. \label{bar xk+1 - bar xk}
\end{align}

By virtue of the smoothness of the global cost function $\tilde{f}$,
\begin{align}
	\|\mb{g}_0^k - \mb{g}^k\|^2 \leq \bar{M}^2\|\bar{\mb{x}}^k - \mb{x}^k\|^2 = \bar{M}^2 \|\mb{x}^k\|^2_K. \label{g0k-gk}
\end{align}
Then, since $\bar{\lambda}_{L} = 1$, we have
\begin{align}
	\|\bar{\mb{g}}_0^k - \bar{\mb{g}}^k\|^2 = & \|L(\mb{g}_0^k - \mb{g}^k)\|^2 \leq \bar{M}^2 \|\mb{x}^k\|^2_K. \label{bar g0k- bar gk}
\end{align}
From \eqref{smooth} and \eqref{bar xk+1 - bar xk}, we have
\begin{align}
	\|\mb{g}_0^{k+1} \!- \mb{g}_0^k\|^2 \!\leq\! \bar{M}^2 \|\bar{\mb{x}}^{k+1} - \bar{\mb{x}}^{k}\|^2 \!=\! (\zeta^k)^2 \bar{M}^2\|\bar{\mb{g}}^k\|^2. \label{g0k+1-g0k}
\end{align}
Then, from \eqref{xk+1 original}, \eqref{H range}, \eqref{Gk range} and \eqref{g0k-gk}, we obtain
\begin{align}
	&\|\mb{x}^{k+1}-\mb{x}^k\|^2_{K}= \|G^k(\theta\mb{q}^k +\mb{g}_0^k + \rho H \mb{x}^k + \mb{g}^k-\mb{g}_0^k)\|^2_K \notag\\
	\leq & 3\|\mb{s}^k\|^2_{\theta^2{G^k}^2K} + 3\|\mb{x}^k\|^2_{\rho^2{G^k}^2H^2} + 3\|\mb{g}^k - \mb{g}_0^k\|^2_{{G^k}^2K}\notag\\
	\leq & 3\|\mb{s}^k\|^2_{\theta^2\hat{\lambda}^2_{G^k}K} + 3\|\mb{x}^k\|^2_{\rho^2\hat{\lambda}^2_{G^k}\bar{\lambda}^2_{H}K+\hat{\lambda}^2_{G^k}\bar{M}^2K}. \label{xk+1 - xk}
\end{align}
Since $\bar{\lambda}_K =1$, it then follows from \eqref{zk original} and \eqref{xk+1 original} that
\begin{align}
	&\frac{1}{2}\|\mb{x}^{k+1}\|_K^2 \notag\\
	= & \frac{1}{2}\|\mb{x}^k - G^k(\rho H \mb{x}^k + \theta\mb{q}^k +\mb{g}_0^k + \mb{g}^k-\mb{g}_0^k)\|_K^2 \notag\\
	\leq & \frac{1}{2} \|\mb{x}^{k}\|_K^2 -\langle  \mb{x}^k -\mb{x}^{k+1}+\mb{x}^{k+1},\theta G^k K\mb{s}^k \rangle \notag\\
	& -\! \|\mb{x}^k\|^2_{\rho G^k H - \frac{1}{2} \rho^2 {G^k}^2 H^2}\!+ \!\frac{1}{2}\|\mb{x}^{k}\|_{G^k K}^2\! + \!\frac{1}{2} \|\mb{g}_0^k - \mb{g}^k\|^2_{G^k K} \notag\\
	& + \frac{\rho^2}{2} \|\mb{x}^k\|^2_{{G^k}^2 H^2} + \frac{\theta^2}{2}\|\mb{s}^k\|^2_{{G^k}^2K} + \frac{\rho^2}{2} \|\mb{x}^k\|^2_{{G^k}^2 H^2}\notag\\
	&  + \frac{1}{2} \|\mb{g}_0^k - \mb{g}^k\|^2_{{G^k}^2 K}+ \theta^2\|\mb{s}^k\|^2_{{G^k}^2K} + \|\mb{g}_0^k - \mb{g}^k\|^2_{{G^k}^2K} \notag\\
	\leq & \frac{1}{2} \|\mb{x}^{k}\|_K^2 - \|\mb{x}^k\|^2_{\rho G^k H - \frac{1}{2} G^k K - \frac{3}{2} \rho^2 {G^k}^2 H^2} \notag\\
	&- \langle  \mb{x}^{k+1},\theta G^k K\mb{s}^k \rangle + \frac{1}{2}\|\mb{x}^{k+1}-\mb{x}^{k}\|_K^2 \notag\\
	&+ 2\theta^2\|\mb{s}^k\|^2_{{G^k}^2K} + \frac{1}{2}\|\mb{g}_0^k - \mb{g}^k\|^2_{G^kK+3{G^k}^2K}\notag\\
	\leq & \frac{1}{2} \|\mb{x}^{k}\|_K^2-\langle  \mb{x}^{k+1},\theta G^k K\mb{s}^k \rangle+ \frac{7\theta^2}{2}\|\mb{s}^k\|^2_{\hat{\lambda}^2_{G^k}K}\notag\\
	&\!-\! \|\mb{x}^k\|^2_{(\rho \underline{\lambda}_H\underline{\lambda}_{G^k} \!- \frac{1}{2}(1+\bar{M}^2)\hat{\lambda}_{G^k}\!-3\rho^2\bar{\lambda}^2_{H}\hat{\lambda}^2_{G^k}\!-3\bar{M}^2\hat{\lambda}^2_{G^k})K},
	\label{norm xk+1}
\end{align}
where the first two inequalities hold since the Cauchy-Schwarz inequality, and the last inequality holds since \eqref{H range}, \eqref{Gk range}, \eqref{g0k-gk}, \eqref{xk+1 - xk} and $\bar{\lambda}_K=1$.

For the second term of \eqref{def V}, we have
\begin{align}
	&\!\!\frac{1}{2} \|\mb{s}^{k+1}\|^2_{\frac{\theta}{\rho}G^{k+1}Q^{k+1}+ G^{k+1}HQ^{k+1}}\!=\!\frac{1}{2} \|\mb{s}^{k+1}\|^2_{\frac{\theta}{\rho}G^{k}Q^{k}+ G^{k}HQ^{k}} \notag\\
	\leq& \frac{1}{2}\|\mb{s}^{k} + \rho \tilde{H} \mb{x}^{k+1} + \frac{1}{\theta}(\mb{g}_0^{k+1}-\mb{g}_0^k) \|^2_{(\frac{\theta}{\rho}G^kQ^k+ G^kHQ^k)(\tilde{D}^k)^{\dagger} } \notag\\
	\leq & \frac{1}{2}\|\mb{s}^{k} \|^2_{\frac{\theta}{\rho}G^kQ^k+ G^kHQ^k} + \langle \mb{s}^{k}, (\theta G^kK+\rho G^kH)\mb{x}^{k+1} \rangle \notag\\
	&  + \frac{1}{2}\|\mb{s}^{k}\|^2_{\theta{G^k}K} + \|\mb{g}_0^{k+1}-\mb{g}_0^k\|^2_{(\frac{K}{\rho^2\theta}+\frac{H^2}{\theta^3})G^k(\tilde{H}^{\dagger})^2}\notag\\
	&+ \|\mb{x}^{k+1}\|^2_{(\theta \mb{I} +\rho H)\rho G^k\tilde{H}}+\|\mb{g}_0^{k+1}-\mb{g}_0^k\|^2_{(\frac{K}{\rho\theta}+\frac{H}{\theta^2})G^k\tilde{H}^{\dagger}}\notag\\
	\leq& \frac{1}{2}\|\mb{s}^{k} \|^2_{\frac{\theta}{\rho}G^kQ^k+ G^kHQ^k} + \langle \mb{s}^{k}, (\theta G^kK+\rho G^kH)\mb{x}^{k+1} \rangle \notag\\
	& + \|\mb{x}^{k+1}\|^2_{(\theta +\rho \bar{\lambda}_H)\rho\bar{\lambda}_{\tilde{D}^k}\bar{\lambda}_{H}\hat{\lambda}_{G^k}^2 K} + \frac{1}{2}\|\mb{s}^{k}\|^2_{\theta{G^k}K}\notag\\
	&+\!\frac{1}{\theta \underline{\lambda}_{\tilde{D}^k}}\Big(\frac{1}{\rho \underline{\lambda}_{H}}\!+\!\frac{1}{\theta}\!+\!\frac{1}{ \underline{\lambda}_{\tilde{D}^k}\underline{\lambda}_{G^k}}(\frac{1}{\rho^2\underline{\lambda}_{H}^2}\!+\!\frac{1}{\theta^2}) \Big){\zeta^k}^2 \bar{M}^2\|\bar{\mb{g}}^k\|^2,
	\label{norm sk+1}
\end{align}
where the first equality holds since $G^kQ^k=H^{\dagger}$; the first inequality follows from \eqref{qk+1 original} and $(\tilde{D}^k)^{\dagger} \succeq \mb{I}_{Nd}$; the second inequality holds since $Q^k(\tilde{D}^k)^{\dagger}=\tilde{H}^{\dagger}$, and the last inequality holds since \eqref{H range}--\eqref{tildeDk range}, \eqref{g0k+1-g0k} and $\bar{\lambda}_K=1$.

For the third term of \eqref{def V}, we have
\begin{align}
	& \langle \mb{x}^{k+1}, K\mb{s}^{k+1} \rangle \notag\\
	=& \langle \mb{x}^{k} - G^k(\rho H \mb{x}^k + \theta\mb{q}^k +\mb{g}_0^k + \mb{g}^k-\mb{g}_0^k), K(\mb{q}^k +\frac{1}{\theta}\mb{g}_0^k) \rangle \notag\\
	& + \langle \mb{x}^{k+1}, K(\rho \tilde{H}\mb{x}^{k+1} +\frac{1}{\theta} (\mb{g}_0^{k+1} - \mb{g}_0^k)) \rangle \notag\\
	\leq& \langle \mb{x}^k, K\mb{s}^k\rangle - \langle \mb{x}^k-\mb{x}^{k+1}+\mb{x}^{k+1}, \rho G^kH\mb{s}^k\rangle\notag\\
	&-\! \|\mb{s}^k\|^2_{\theta G^kK}\!+\! \frac{1}{2}\|\mb{g}^k\!-\!\mb{g}_0^k\|^2_{G^kK}\! +\!\frac{1}{2}\|\mb{s}^k\|^2_{G^kK} \!+ \!\|\mb{x}^{k+1}\|^2_{\rho\tilde{H}} \notag\\
	&+\frac{\rho \underline{\lambda}_{G^k}\underline{\lambda}_{H}}{2}\|\mb{x}^{k+1}\|^2_{K} + \frac{1}{2\theta^2\rho \underline{\lambda}_{G^k}\underline{\lambda}_{H}}\|\mb{g}_0^{k+1} - \mb{g}_0^k\|^2\notag\\
	\leq & \langle \mb{x}^k, K\mb{s}^k\rangle - \langle \mb{x}^{k+1}, \rho G^kH\mb{s}^k\rangle+ \frac{1}{2}\|\mb{x}^{k+1}-\mb{x}^k\|^2_K\notag\\
	&  - \|\mb{s}^k\|^2_{(\theta-\frac{1}{2}) G^kK -\frac{1}{2}\rho^2 {G^k}^2H^2} + \frac{1}{2}\|\mb{g}^k-\mb{g}_0^k\|^2_{G^k K}\notag\\
	& +\|\mb{x}^{k+1}\|^2_{\frac{\rho \underline{\lambda}_{G^k}\underline{\lambda}_{H}}{2}K+\rho \tilde{H}} + \frac{1}{2\theta^2\rho \underline{\lambda}_{G^k}\underline{\lambda}_{H}}\|\mb{g}_0^{k+1} - \mb{g}_0^k\|^2\notag\\
	\leq & \langle \mb{x}^k, K\mb{s}^k\rangle - \langle \mb{x}^{k+1}, \rho G^kH\mb{s}^k\rangle+\frac{(\zeta^k)^2\bar{M}^2}{2\theta^2\rho \underline{\lambda}_{G^k}\underline{\lambda}_{H}}\|\bar{\mb{g}}^k\|^2 \notag\\
	&+ \frac{1}{2}\|\mb{x}^{k}\|^2_{(\bar{M}^2\hat{\lambda}_{G^k}+3(\rho^2\bar{\lambda}^2_H+\rho\bar{\lambda}_{H}\bar{M}^2)\hat{\lambda}^2_{G^k})K}  \notag\\
	&- \|\mb{s}^k\|^2_{(\theta-\frac{1}{2}){G^k}K -\frac{1}{2}(3\theta^2+\rho^2\bar{\lambda}_{H}^2) \hat{\lambda}^2_{G^k}K}\notag\\
	&+ \|\mb{x}^{k+1}\|^2_{\frac{\rho \underline{\lambda}_{G^k}\underline{\lambda}_{H}}{2}K+\rho \bar{\lambda}_{\tilde{D}^k}\bar{\lambda}_{H}\hat{\lambda}_{G^k}K},
	\label{xk+1, sk+1}
\end{align}
where the first equality holds since iterations \eqref{zk original}--\eqref{qk+1 original} and the first two inequalities hold since the Cauchy-Schwarz inequality; the last inequality holds since \eqref{H range}--\eqref{tildeDk range}, \eqref{g0k-gk}, \eqref{g0k+1-g0k}, \eqref{xk+1 - xk}, and $\bar{\lambda}_K=1$.

For the last term of \eqref{def V}, we have
\begin{align}
	&f(\bar{x}^{k+1}) - f^*= \tilde{f}(\bar{\mb{x}}^{k+1}) - \tilde{f}^* \notag\\
	= & \tilde{f}(\bar{\mb{x}}^{k}) - \tilde{f}^* + \tilde{f}(\bar{\mb{x}}^{k+1}) - \tilde{f}(\bar{\mb{x}}^{k}) \notag\\
	\leq & \tilde{f}(\bar{\mb{x}}^{k}) - \tilde{f}^* - \langle {\zeta^k}\bar{\mb{g}}^k, \bar{\mb{g}}_0^k \rangle + \frac{1}{2}(\zeta^k)^2\bar{M}\|\bar{\mb{g}}^k\|^2\notag\\
	= & f(\bar{x}^k) -f^* - \frac{1}{2}{\zeta^k}\langle \bar{\mb{g}}^k, \bar{\mb{g}}^k +\bar{\mb{g}}_0^k -\bar{\mb{g}}^k \rangle \notag\\
	&- \frac{1}{2}{\zeta^k}\langle \bar{\mb{g}}^k -\bar{\mb{g}}_0^k+\bar{\mb{g}}_0^k, \bar{\mb{g}}^k_0  \rangle + \frac{1}{2}(\zeta^k)^2\bar{M}\|\bar{\mb{g}}^k\|^2\notag\\
	\leq & f(\bar{x}^k) -f^* - \frac{{\zeta^k}}{4}\|\bar{\mb{g}}^k\|^2 + \frac{{\zeta^k}}{2} \|\bar{\mb{g}}^k -\bar{\mb{g}}_0^k\|^2\notag\\
	&  + \frac{{\zeta^k}}{4} \|\bar{\mb{g}}_0^k\|^2 - \frac{{\zeta^k}}{2}\|\bar{\mb{g}}_0^k\|^2+ \frac{1}{2}(\zeta^k)^2\bar{M}\|\bar{\mb{g}}^k\|^2 \notag\\
	\leq & f(\bar{x}^k) -f^* - \frac{{\zeta^k}}{4}(1-2{\zeta^k}\bar{M})\|\bar{\mb{g}}^k\|^2 \notag\\
	&- \frac{{\zeta^k}}{4}\|\bar{\mb{g}}_0^k\|^2 + \frac{1}{2}{\zeta^k}\bar{M}^2\|\mb{x}^k\|^2_K,
	\label{fk+1-f*}
\end{align}
where the first equality holds for $f^* = \tilde{f}^*$. The first inequality holds since the descent lemma, \eqref{bar xk+1 - bar xk} and $L=L^2$. The second inequality follows from the Cauchy-Schwarz inequality and the last inequality holds since \eqref{bar g0k- bar gk}.

Here we let ${\zeta^k}<\frac{\theta}{4\kappa_1^k\bar{M}^2}\hat{\lambda}_{G^k}$. Substituting this to the last term in \eqref{fk+1-f*} and combining \eqref{norm xk+1}--\eqref{fk+1-f*} yield \eqref{tilde Vk+1-tilde Vk}.

\subsection{Proof of Theorem~\ref{theorem nonconvex}}\label{proof theorem nonconvex}
From \eqref{def tildeV} and \eqref{def V}, we have
\begin{align}
	\tilde{V}^k \geq & {\xi_2^k} \|\mb{x}^k\|^2_K + {\xi_1} \|\mb{s}^k\|^2_K - {\xi_3^k}\|\mb{x}^k\|^2_K \notag\\
	&- {\|\mb{s}^k\|^2_K}/({4{\xi_3^k}}) + (f(\bar{x}^k) - f^*) \notag\\
	= & ({\xi_2^k}\!-\!{\xi_3^k})(\|\mb{x}^k\|^2_K\!\! + \!\|\mb{s}^k\|^2_K)\!+\!f(\bar{x}^k) \!- \!f^*\!\geq\!  {\delta_3} \hat{V}^k, \label{delta1}
\end{align}
where ${\xi_2^k},\xi_3^k$ and $\delta_3$ are given in Theorem~\ref{theorem pl}. The first inequality follows from \eqref{tilde H} and the AM-GM inequality; the second equality holds since ${\xi_2^k}-{\xi_3^k}={\xi_1}-\frac{1}{4{\xi_3^k}}$. From $\hat{\lambda}_{G^k}<(\sqrt{{\epsilon_1^k}^2+2{\epsilon_2^k}}-{\epsilon_1^k})/(2{\epsilon_2^k})$, we have ${\xi_2^k}>0$. For the last inequality, since ${\xi_1} > \frac{1}{2}$, we have $2-\frac{1}{{\xi_1}}>0$, together with $\hat{\lambda}_{G^k}<\frac{1}{2{\epsilon_2^k}}(-{\epsilon_1^k}+\sqrt{(\epsilon_1^k)^2+2-\frac{1}{{\xi_1}}})$, we obtain ${\xi_1}{\xi_2^k}>\frac{1}{4}$. Thus, the last inequality in \eqref{delta1} follows from $0<{\xi_2^k}-{\xi_3^k}<\frac{1}{2}$.

Similarly, for $\delta_1>1$, we have
\begin{equation}
	\tilde{V}^k \leq {\delta_1} \hat{V}^k. \label{delta2}
\end{equation}

From $0<\bar{\lambda}_{\tilde{D}^k} < \frac{1}{2{\kappa_1^k} \kappa_2}\leq \frac{1}{2}$, ${\kappa_1^k},\kappa_2> 0$, we have $0 < \frac{1}{2{\kappa_1^k} \kappa_2} - \bar{\lambda}_{\tilde{D}^k} < 1$. From $\rho > (1+\bar{M}^2+\frac{\theta}{8\kappa_1})/(\frac{ \bar{\lambda}_{H}}{2{\kappa_1^k} \kappa_2} -  \bar{\lambda}_{H}\bar{\lambda}_{\tilde{D}^k})$, we have ${\epsilon_3^k} > 0$.

Next, from $0 < \hat{\lambda}_{G^k} < \min\{\frac{{\epsilon_3^k}}{{\epsilon_4^k}}, \frac{{\epsilon_5^k}}{\epsilon_6}\}$, we have
\begin{align}
	\label{epsilon3-epsilon4} & \hat{\lambda}_{G^k} ({\epsilon_3^k} - {\epsilon_4^k} \hat{\lambda}_{G^k})> 0 , \\
	\label{epsilon5-epsilon6} & \hat{\lambda}_{G^k}({\epsilon_5^k} - \epsilon_6 \hat{\lambda}_{G^k})> 0.
\end{align}

From $\theta >\max_{k\geq 0}\{4\kappa_1^k\bar{M}^2,1\}$ and $\rho > \theta/\underline{\lambda}_H$, we have ${\epsilon_9^k}<\epsilon_7/\kappa_1^k$. Then, from $\hat{\lambda}_{G^k}<(\epsilon_7-{\epsilon_9^k}{\kappa_1^k})/{\epsilon_8^k}$, there exists ${\zeta^k}$ satisfying $\hat{\lambda}_{G^k}<{\zeta^k}<\frac{\epsilon_7}{\epsilon_{10}^k}$  such that
\begin{align}
	{\zeta^k}(\epsilon_7 - {\epsilon_{10}^k} {\zeta^k}) >0 .\label{epsilon7-epsilon10}
\end{align}
Summing \eqref{tilde Vk+1-tilde Vk} from $k=0$ to $T$ with \eqref{Wk} yields
\begin{equation}
	\sum _{k=0}^T \tilde{V}^{k+1} \leq \sum_{k=0}^T \tilde{V}^k - \delta_2 \sum_{k=0}^T W^k.
	\label{delta3}
\end{equation}

Then, \eqref{delta3} yields
\begin{equation}
	\tilde{V}^{T+1} + \delta_2 \sum_{k=0}^T W^k \leq\tilde{V}^0. \label{tildeVT+1+delta3}
\end{equation}

From \eqref{V hat}, \eqref{delta2} \eqref{delta3} and \eqref{tildeVT+1+delta3}, we obtain \eqref{theorem 1 sublinear}.

Similarly, combining \eqref{def V}, \eqref{delta2} and \eqref{delta3} yields \eqref{theorem 1 function error}.

\subsection{Proof of Theorem~\ref{theorem pl}}\label{proof theorem pl}
From \eqref{delta1} we have
\begin{equation}
	\|\mb{x}^k - \bar{\mb{x}}^k\|^2 + f(\bar{x}^k) - f^* \leq \hat{V}^k \leq \frac{V^k}{{\delta_3}}.\label{norm consensus+function}
\end{equation}
Assumption~\ref{PL condition} implies
\begin{equation}
	\|\bar{\mb{g}}_0^k\|^2 = \|\nabla f(\bar{x}^k)\|^2 \geq 2\nu (f(\bar{x}^k) - f^*).\label{barg0k geq 2nu}
\end{equation}
Based on \eqref{epsilon3-epsilon4} and \eqref{epsilon5-epsilon6}, we have
\begin{equation}
	\delta_4 > 0 \,\,\text{and}\,\, \delta = \frac{\delta_4}{\delta_1} >0. \label{delta4}
\end{equation}
% Since ${\epsilon_5^k} = \frac{(\theta-1)}{2{\kappa_1^k}}>0$, $\epsilon_6 > \frac{3\theta^2\rho \bar{\lambda}_H}{2}$, ${\delta_1} > 1$, and $\hat{\lambda}_{G^k} < \frac{{\epsilon_5^k}}{\epsilon_6}$, for any $k\geq 0$, we have
% \begin{equation}
% 	0 \! < \!\delta\! = \!\frac{\delta_4}{\delta_1}\! \leq\! \frac{\hat{\lambda}_{G^k}({\epsilon_5^k} - \hat{\lambda}_{G^k} \epsilon_6)}{{\delta_1}} \!<\! \frac{(\epsilon_5^k)^2}{4\epsilon_6 {\delta_1}} \!<\! \frac{1}{24({\kappa_1^k})^2\rho \bar{\lambda}_H}.
% 	\label{delta}
% \end{equation}
Then from \eqref{tilde Vk+1-tilde Vk}, \eqref{delta2}, \eqref{epsilon7-epsilon10}, \eqref{barg0k geq 2nu} and \eqref{delta4}, we have
\begin{align}
	\tilde{V}^{k+1} \!\leq & \tilde{V}^k- \|\mb{x}^k\|^2_{\hat{\lambda}_{G^k} ({\epsilon_3^k} - {\epsilon_4^k} \hat{\lambda}_{G^k})K } - \|\mb{s}^k\|^2_{  \hat{\lambda}_{G^k}({\epsilon_5^k} - \epsilon_6 \hat{\lambda}_{G^k})K }\notag\\
	& \!\!\!-\! \frac{\nu {\zeta^k}}{2} (f(\bar{x}^k)\! - \!f^*) \!\leq\! \tilde{V}^{k} \!-\! \delta_4 \hat{V}^k\!\leq\! \tilde{V}^k \!\!- \!\frac{\delta_4}{{\delta_1}}\tilde{V}^k,
	\label{tildeVk+1 leq -delta4}
\end{align}
together with \eqref{delta4}, we have
\begin{equation}
	\tilde{V}^{k+1} \!\leq\! (1-\delta) \tilde{V}^k \!\leq\! (1-\delta)^{k+1} \tilde{V}^0 \!\leq\! (1-\delta)^{k+1} {\delta_1} \hat{V}^0.
	\label{tildeVk+1 leq hat V0}
\end{equation}

Combining \eqref{norm consensus+function} and \eqref{tildeVk+1 leq hat V0} yields \eqref{theorem 2 linear}.

\bibliographystyle{IEEEtran}
\bibliography{IEEEabrv,reference}

\end{document}